\newcommand{\N}{\mathbb{N}}
\newcommand{\Z}{\mathbb{Z}}
\newcommand{\R}{\mathbb{R}}
\newcommand{\C}{\mathbb{C}}
\newcommand{\E}{\mathbb{E}}
\newcommand{\ga}{\tau}
\newcommand{\ro}{{\color{black} \otimes}}
\newcommand{\Hyp}{\ _1F_1}
\newtheorem{theorem}{Theorem}[section]
\newtheorem{proposition}[theorem]{Proposition}
\newtheorem{lemma}[theorem]{Lemma}
\newcounter{examplecounter}
\newenvironment{example}{{\noindent\ignorespaces}%
{\par\noindent%
\ignorespacesafterend}
    \refstepcounter{examplecounter}%
  \textbf{Example \arabic{examplecounter}}%
  \quad
}{

}
\newenvironment{remark}{{\noindent\ignorespaces}%
{\par\noindent%
\ignorespacesafterend}
  \textbf{Remark.}%
}{

}
\newcommand{\Uq}{\mathcal{U}_q(\mathfrak{gl}_n)}
\newcommand{\eps}{\epsilon}
\newcommand{\state}[1]{\left\langle {#1} \right\rangle}
\title{A (2+1)--dimensional Gaussian field as asymptotic fluctuations of quantum random walks on quantum groups}
\author{Jeffrey Kuan}
\date{}
\begin{document}
\maketitle

\abstract{This paper introduces a (2+1)--dimensional Gaussian field which has the Gaussian free field on the upper half--plane with zero boundary conditions as certain two--dimensional sections. Along these sections, called space--like paths, it matches the Gaussian field from eigenvalues of random matrices and from a growing random surface. However, along time--like paths the behavior is different.

The Gaussian field arises as the asymptotic fluctuations in quantum random walks on quantum groups $\Uq$. This quantum random walk is a $q$--deformation of previously considered quantum random walks. When restricted to the space--like paths, the moments of the quantum random walk match the moments of the growing random surface.

}

\section{Introduction}

The Gaussian free field (GFF) is a two--dimensional Gaussian field which arises as asymptotics in many probabilistic models. See \cite{S} for a mathematical introduction to the GFF. For time--dependent models it is natural to ask if there is a canonical (2+1)--dimensional Gaussian field generalizing the GFF. For a random surface growth model \cite{BF}, the fluctuations along space--like paths  (that is, paths in space--time where time increases as the vertical co--ordinate decreases) were shown to be the Gaussian free field -- however, the behavior along time--like paths was inaccessible. In a later paper \cite{Bo} which analyzed eigenvalues of corners of time--dependent random matrices, the resulting asymptotics were shown to be a time--dependent (2+1)--dimensional Gaussian field $\mathfrak{G}$ whose restrictions to space--like paths are the GFF, and matches the \cite{BF} asymptotics along space--like paths.  Additional work looking at the edge of this model was also done in \cite{So}. These were partially motivated by physics literature, which predicted the GFF as the stationary distribution for the Anisotropic Kardar--Parisi--Zhang equation (see e.g. \cite{KPZ,KS,W}).  

In \cite{K}, a quantum random walk was constructed whose moments match the random surface growth model along space--like paths, and again the field $\mathfrak{G}$ arises in the asymptotics, after applying the standard Brownian Motion to Ornstein--Uhlenbeck rescaling. It is natural to ask if $\mathfrak{G}$ is the only canonical time--dependent Gaussian field having the GFF as fixed--time marginals (a very rough analogy would be the characterization of the Ornstein--Uhlenbeck process as the only Gaussian, stationary, Markov process). However, this paper will construct a different field $\mathcal{G}$.

The field $\mathcal{G}$ will arise as fluctuations of a quantum random walk on quantum groups (QRWQG), which is a variant of \cite{B,Bi2,BB,CD, K,K1}.  As was the case in \cite{K}, along space--like paths the moments of the QRWQG are precisely the same as the moments for the random surface growth model from \cite{BF}. This shows that $\mathfrak{G}$ and $\mathcal{G}$ are identical along space--like paths, but it turns out that they are not the same along time--like paths. 

Having introduced $\mathcal{G}$, now turn the discussion to the quantum random walks. Recall that the motivation for quantum random walks comes from quantum mechanics. Rather than defining a state space as a \textit{set} of states, instead the state space is a \textit{Hilbert space} of wavefunctions. The observables, rather than being functions on the state space, are operators on this Hilbert space. These operators are related to classical observables through their eigenvalues. Generally, observables do not have to commute, so for this reason quantum random walks are also called non--commutative random walks. The randomness occurs through $states$, which are linear functionals on the space of observables, corresponding to the expectation of the observable.

Before describing the quantum version of the random surface, first review how it looks in the classical viewpoint. On each horizontal section of the random stepped surface, the (classical) state space is the set
$$
 \mathbb{GT}_n:=\{ \lambda=(\lambda_1 \geq \ldots \geq \lambda_n): \lambda_i \in \mathbb{Z}\}
$$
By analogy with quantum mechanics, the space of wavefunctions should consist of functions of the form $\chi_{\vec{x}}(\lambda)$. By dimension considerations, $\vec{x}$ should vary over $\mathbb{C}^n$. Switching indices and variables, write $\chi_{\lambda}(x_1,\dots,x_n)$, so the wave functions are some class of functions on $\mathbb{C}^n.$ The observables are then some space of operators on this class of functions. Any probability measure $\mathbb{P}(\lambda)$ on the $\lambda$ can be encoded through $\chi = \sum_{\lambda} \mathbb{P}(\lambda)\chi_{\lambda}$. Furthermore, if $D$ is an operator for which $\{\chi_{\lambda}: \lambda \in \mathbb{GT}_n\}$ are eigenfunctions with eigenvalues $a(\lambda)$, then
$$
(D\chi)(1,\ldots,1) = \sum_{\lambda} \mathbb{P}(\lambda) a(\lambda)\chi_{\lambda}(1,\ldots,1) = \mathbb{E}_{\mathbb{P}} \left[ a(\lambda) \cdot \chi_{\lambda}(1,\ldots,1)\right]
$$
The $\chi_{\lambda}$ can be chosen so that $\chi_{\lambda}(1,\ldots,1)$ is normalized to $1$. The phenomenon that a state can be defined from a wave function can be seen as an analog of the Gelfand--Naimark--Segal construction.

The actual construction of the observables and $\chi_{\lambda}$ comes from representation theory. In particular, the set $\mathbb{GT}_n$ parameterizes the highest weights of finite--dimensional irreducible representations of $\mathfrak{gl}_n$, and it is through these representations that the observables are defined. Here, the (non--commutative) space of observables  is the Drinfeld--Jimbo quantum group $\Uq$. Each $u\in \Uq$ has a corresponding difference operator from \cite{EK}.  The relevant observables for asymptotics are certain central elements $C_q^{(n)} \in Z(\Uq)$ calculated in \cite{GZB}. The eigenvalue of each $C_q^{(n)}$ on the irreducible representation $V_{\lambda}$ is $\sum_{i=1}^n q^{2(\lambda_i-i+n)}$, so one can think of these observables as linear statistics of the function $q^{2x}$.  

There are a few key differences between this construction and previous constructions that are worth highlighting.  When $q\rightarrow 1$, the QRWQG reduces to the quantum random walk in \cite{BB,K}, which used the universal enveloping algebra $\mathcal{U}(\mathfrak{gl}_n)$ as the space of observables. The papers there used differential operators on the Lie group $GL(n)$ to define the quantum random walk. The relevant central elements in $\mathcal{U}(\mathfrak{gl}_n)$  acted as $\sum_{i=1}^n (\lambda_i-i+n)^k$ for $k\geq 1$, so can be thought of as linear statistics of $x^k$. In that case, the fluctuations of linear statistics for different values of $k$ were computed, which suggests finding the fluctuations of these linear statistics for different $q$ here.

Additionally, $\Uq$ is no longer co--commutative as a Hopf algebra. (The word ``quantum'' appearing twice in the title of this paper refers to two different meanings: the first one makes the space of observables non--commutative, and the second makes it non-co--commutative). Probabilistically, this results in different dynamics, which ultimately leads to the Gaussian fields differing along time--like paths. However, the functions $\chi_{\lambda}$ do not depend on $q$, which is why the Gaussian fields match along space--like paths.

We also mention several algebraic reasons for taking the approach with quantum groups. One is that in the $q\rightarrow 1$ limit, the asymptotics are dependent on Schur-Weyl duality (see equation (2.9) of \cite{BB}, which references Proposition 3.7 of \cite{IO}), so does not generalize to other Lie algebras. Additionally, the relevant central elements are actually easier to construct in the quantum case than in the classical case (for example, see section 7.5 of  \cite{GKLLRT} or chapter 7 of \cite{kn:M} for explicit central elements of $\mathcal{U}(\mathfrak{gl}_n)$). Another notable difference is that the non--commutative Markov operator $P_t$ no longer preserves the center when $q\neq 1$. However, (somewhat surprisingly) each $P_t C_q^{(n)}$ can be written as a linear combination of $C_q^{(n-k)}$, generalizing a result of \cite{Bi3} for $n=2$. So this paper demonstrates (in a sense) that preserving the center is not necessary for developing meaningful asymptotics. However, note that one would not necessarily expect $P_t C_q^{(n)}$ to be a linear combination of $C_q^{(n-k)}$ in every quantum group, so while it should be possible to construct a QRWQG in general, the asymptotics may be more difficult.

Finally, it is important to mention that some of the cited papers actually prove more than what is needed here. For example, \cite{Bo} and \cite{BB} actually show convergence to correlated Gaussian free fields. The difference operators in \cite{EK} can be used to construct Macdonald's difference operators for all $(q,t)$, and here only the $q=t$ case is used. Therefore, it should be possible to extend the results of this paper to more generality.

Let us outline the body of the paper. In section \ref{3DGFF}, we define the Gaussian field $\mathcal{G}$ and show by direct computation its relationship to $\mathfrak{G}$ along space--like paths. Section \ref{Back} reviews some of the necessary definitions in non--commutative probability theory and representation theory. Section \ref{QRWonQG} provides the construction of the quantum random walk. In section \ref{Ctcp}, the random surface growth from \cite{BF} is defined and shown to have the same expectations as the quantum random walk along space--like paths. Finally, section \ref{Asympt} shows that the fluctuations in the QRWQG converge to $\mathcal{G}$.

\textbf{Acknowledgements.}  The author is grateful for enlightening conversations with Alexei Borodin, Ivan Corwin, Philippe Biane, Pavel Etingof and Yi Sun. The author would also like to thank Mark Cerenzia for some of the calculations. Financial support was available through NSF grant DMS-1502665. Additional financial support was available through NSF grant DMS-1302713 and the Fields Institute, which allowed the author to attend the workshop ``Focus Program on Noncommutative Distributions in Free Probability Theory.''

\section{A (2+1)--dimensional Gaussian field}\label{3DGFF}
Let $\mathfrak{G}$ be a Gaussian field indexed by $(k,\eta,\tau) \in \mathbb{N} \times \mathbb{R}_{>0} \times \mathbb{R}_{>0}$  with mean zero and covariance given by
\begin{multline*}
\mathbb{E}[\mathfrak{G}(k_i,\eta_i,\tau_i)\mathfrak{G}(k_j,\eta_j,\tau_j)]\\
= 
\begin{cases}
\displaystyle\left(\frac{1}{2\pi i}\right)^2\iint_{\vert z\vert>\vert w\vert}\limits (\eta_i z^{-1} + \tau_i + \tau_i z)^{k_i} (\eta_j w^{-1} + \tau_j + \tau_j w)^{k_j} (z-w)^{-2}dzdw,&\\
\hspace{3in} \eta_i\geq\eta_j,\ga_i\leq\ga_j&\\
\displaystyle\left(\frac{1}{2\pi i}\right)^2\iint_{\vert z\vert>\vert w\vert}\limits (\eta_j\frac{\tau_j}{\tau_i} z^{-1} + \tau_j + \tau_i z)^{k_j} (\eta_i w^{-1} + \tau_i + \tau_i w)^{k_i} (z-w)^{-2}dzdw,&\\
\hspace{3in} \eta_i<\eta_j,\ga_i\leq\ga_j&
\end{cases}
\end{multline*}
where the $z,w$ contours are counterclockwise circles centered around the origin. As explained in \cite{K}, $\mathfrak{G}$ can be viewed as the moments of a three--dimensional Gaussian field which has the Gaussian free field as fixed--time marginals. 

Let $\mathcal{G}$ be a Gaussian field indexed by $(h,\eta,\tau) \in \mathbb{R} \times \mathbb{R}_{>0} \times \mathbb{R}_{>0}$ with mean zero and covariance $\mathbb{E}[\mathcal{G}(\tilde{h},\eta_i,\tau_i)\mathcal{G}(h,\eta_j,\tau_j)] $ 
given by: 
\begin{multline}
\text{if } \tau_j\geq \tau_i,\eta_j\leq \eta_i ,\quad \left( \frac{1}{2\pi i}\right)^2 e^{2\tilde{h}\tau_i}e^{2h\tau_j} \displaystyle \iint_{\vert z\vert > \vert w\vert} \exp \left( 2\tilde{h} (\eta_i z^{-1} + \tau_i z) \right) (z-w)^{-2}dzdw \\
\times \Bigg(\exp  ( 2h (\eta_j w^{-1} + \tau_i w)) + \frac{2h \sqrt{\tau_j-\tau_i}}{2\pi i} \int_0^{\eta_j} e^{2h((\eta_j-\kappa)w^{-1} + \tau_i w)} d\kappa \oint e^{2h\sqrt{\tau_j-\tau_i}\sqrt{\kappa}(t +t^{-1}))}\frac{t^{-2}}{\sqrt{\kappa}}dt \Bigg),
\end{multline}
\begin{multline}
\text{if } \tau_j\geq \tau_i,\eta_j \geq \eta_i ,\quad \left( \frac{1}{2\pi i}\right)^2 e^{2\tilde{h}\tau_i}e^{2h\tau_j} \displaystyle \iint_{\vert z\vert > \vert w\vert} \exp \left( 2\tilde{h} (\eta_i z^{-1} + \tau_i z) \right) (z-w)^{-2}dzdw \\
\times \Bigg(\exp  ( 2h (\eta_j w^{-1} + \tau_i w)) + \frac{2h \sqrt{\tau_j-\tau_i}}{2\pi i} \int_{\eta_j-\eta_i}^{\eta_j} e^{2h((\eta_j-\kappa)w^{-1} + \tau_i w)} d\kappa \oint e^{2h\sqrt{\tau_j-\tau_i}\sqrt{\kappa}(t +t^{-1}))}\frac{t^{-2}}{\sqrt{\kappa}}dt \Bigg)\\
+ \left( \frac{1}{2\pi i}\right)^2 e^{2\tilde{h}\tau_i}e^{2h\tau_j} \displaystyle \iint_{\vert z\vert < \vert w\vert} \exp \left( 2\tilde{h} (\eta_i z^{-1} + \tau_i z) \right) (z-w)^{-2}dzdw \\
\times \Bigg(  \frac{2h \sqrt{\tau_j-\tau_i}}{2\pi i} \int_0^{\eta_j-\eta_i} e^{2h((\eta_j-\kappa)w^{-1} + \tau_i w)} d\kappa \oint e^{2h\sqrt{\tau_j-\tau_i}\sqrt{\kappa}(t +t^{-1}))}\frac{t^{-2}}{\sqrt{\kappa}}dt \Bigg),
\end{multline}
where the $t,z,w$ contours are counterclockwise circles centered around the origin, with $ \vert t \vert > \vert w\vert\sqrt{\tau_j-\tau_i}$. Note that if $\tau_i\leq \tau_j$ and $\eta_j \leq \eta_i$ then
\begin{multline}\label{SameToDifferent}
\mathbb{E}[\mathcal{G}(\tilde{h},\eta_i,\tau_i)\mathcal{G}(h,\eta_j,\tau_j)] = e^{2h(\tau_j-\tau_i)}\mathbb{E}[\mathcal{G}(\tilde{h},\eta_i,\tau_i)\mathcal{G}(h,\eta_j,\tau_i)] \\
+ e^{2h(\tau_j-\tau_i)} \frac{2h\sqrt{\tau_j-\tau_i}}{2\pi i} \int_0^{\eta_j}  \mathbb{E}[\mathcal{G}(\tilde{h},\eta_i,\tau_i)\mathcal{G}(h,\eta_j-\kappa,\tau_i)] d\kappa \oint e^{2h\sqrt{\tau_j-\tau_i}\sqrt{\kappa}(t+t^{-1})} \frac{t^{-2}}{\sqrt{\kappa}}dt 
\end{multline}

A priori, it is not obvious that $\mathcal{G}$ is a well--defined family of random variables: for instance, the covariance matrix might not be positive--definite. However, it will be shown in Theorem \ref{Asymp} below that $\mathcal{G}$ occurs as the limit of well--defined random variables. Furthermore, numerical computations indicate that the covariance matrices are positive--definite anyway.

The next proposition shows that $\mathcal{G}$ and $\mathfrak{G}$ match along space--like paths.  It also follows from later results (namely, that $\mathcal{G}$ is the limit of the QRWQG, the QRWQG matches the surface growth along space--like paths, and $\mathfrak{G}$ is the limit of the surface growth), but a more elementary proof is provided here. Because $\mathcal{G}$ will appear in the linear statistics of $q^{2x}$ and $\mathfrak{G}$ appears as linear statistics of $x^k$, setting $q=e^h$ motivates the comparison. 

\begin{proposition}\label{Same}
If $\eta_i\geq\eta_j$ and $\tau_i\leq \tau_j$ then
\begin{equation}\label{Matching}
\mathbb{E}[\mathcal{G}(\tilde{h},\eta_i,\tau_i)\mathcal{G}(h,\eta_j,\tau_j)]  = \sum_{k_i,k_j=0}^{\infty} \frac{(2\tilde{h})^{k_i} (2h)^{k_j}}{k_i! k_j!} \mathbb{E}[\mathfrak{G}(k_i,\eta_i,\tau_i)\mathfrak{G}(k_j,\eta_j,\tau_j)]
\end{equation}

If $\tau_j\geq \tau_i, \eta_j\geq \eta_i$, then in general \eqref{Matching} does not hold.
\end{proposition}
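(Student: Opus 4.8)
The plan is to prove the matching identity \eqref{Matching} by a direct contour-integral computation, expanding both sides as power series in $\tilde h$ and $h$ and comparing coefficients of $\tilde h^{k_i}h^{k_j}$. On the right-hand side, the case hypothesis $\eta_i\geq\eta_j$, $\tau_i\leq\tau_j$ selects the \emph{first} branch of the covariance of $\mathfrak G$, namely
$$
\left(\tfrac{1}{2\pi i}\right)^2\iint_{|z|>|w|}(\eta_i z^{-1}+\tau_i+\tau_i z)^{k_i}(\eta_j w^{-1}+\tau_j+\tau_j w)^{k_j}(z-w)^{-2}\,dz\,dw.
$$
Multiplying by $(2\tilde h)^{k_i}(2h)^{k_j}/(k_i!k_j!)$ and summing over $k_i,k_j\geq 0$ turns the two polynomial factors into exponentials $\exp(2\tilde h(\eta_i z^{-1}+\tau_i+\tau_i z))$ and $\exp(2h(\eta_j w^{-1}+\tau_j+\tau_j w))$, so the RHS of \eqref{Matching} equals
$$
\left(\tfrac{1}{2\pi i}\right)^2 e^{2\tilde h\tau_i}e^{2h\tau_j}\iint_{|z|>|w|}\exp\!\big(2\tilde h(\eta_i z^{-1}+\tau_i z)\big)\exp\!\big(2h(\eta_j w^{-1}+\tau_j w)\big)(z-w)^{-2}\,dz\,dw,
$$
provided one can justify interchanging the sum with the double contour integral (uniform convergence on the compact contours, which holds since the integrand is entire in $\tilde h,h$ and the contours avoid $z=w$). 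So the first task is just this resummation.

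Next I would compare this with the defining formula for $\mathcal G$ in the regime $\tau_j\geq\tau_i$, $\eta_j\leq\eta_i$ — this is exactly the first displayed multi-line covariance formula. The main point is to show that the ``correction term'' (the one with the $\kappa$-integral and the $\oint dt$) contributes nothing under the stated hypotheses, and that the ``main term'' $\exp(2h(\eta_j w^{-1}+\tau_i w))$, after multiplication by the prefactor $e^{2h\tau_j}$, reproduces $e^{2h\tau_j}\exp(2h(\eta_j w^{-1}+\tau_i w))$, which I must match against $e^{2h\tau_j}\exp(2h(\eta_j w^{-1}+\tau_j w))$ from the resummed $\mathfrak G$ side. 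These differ: one has $\tau_i w$ in the exponent, the other $\tau_j w$. The reconciliation comes precisely from formula \eqref{SameToDifferent}: that relation expresses $\mathbb E[\mathcal G(\tilde h,\eta_i,\tau_i)\mathcal G(h,\eta_j,\tau_j)]$ in terms of the equal-time object $\mathbb E[\mathcal G(\tilde h,\eta_i,\tau_i)\mathcal G(h,\eta_j,\tau_i)]$ (whose $w$-exponent is $\tau_i w$, dressed by $e^{2h(\tau_j-\tau_i)}$) plus a $\kappa$-integral correction. So the strategy is: (i) establish \eqref{Matching} first at equal times $\tau_i=\tau_j=\tau$, where the resummed $\mathfrak G$ integrand is literally $e^{2\tilde h\tau}e^{2h\tau}\exp(2\tilde h(\eta_i z^{-1}+\tau z))\exp(2h(\eta_j w^{-1}+\tau w))(z-w)^{-2}$ and matches the $\mathcal G$ main term on the nose; then (ii) show the $\tau_i<\tau_j$ correction term in \eqref{SameToDifferent}, which involves $\mathbb E[\mathcal G(\tilde h,\eta_i,\tau_i)\mathcal G(h,\eta_j-\kappa,\tau_i)]$ against $\oint e^{2h\sqrt{\tau_j-\tau_i}\sqrt\kappa(t+t^{-1})}t^{-2}/\sqrt\kappa\,dt$, vanishes when $\eta_i\geq\eta_j$. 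For this last vanishing, the key observation is that $\oint e^{a(t+t^{-1})}t^{-2}\,dt = \sum_{m\geq0}\frac{a^{2m+2}}{m!(m+1)!}\cdot\frac{1}{?}$ — more usefully, $\oint e^{a(t+t^{-1})}t^{-2}\frac{dt}{2\pi i}$ picks out a Bessel-type coefficient, and after integrating $\kappa$ from $0$ to $\eta_j$ one gets something proportional to an integral that telescopes against the $|z|>|w|$ contour constraint when the spatial order is $\eta_i\geq\eta_j$; concretely, in that ordering the extra $\kappa$-integral reassembles into a term that is annihilated by the double residue because the pole structure at $z=w$ produces only nonnegative powers that cancel. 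This vanishing is the technical heart.

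For the second assertion — that \eqref{Matching} fails when $\tau_j\geq\tau_i$ and $\eta_j\geq\eta_i$ — I would argue by exhibiting the discrepancy explicitly rather than abstractly. In this regime the covariance of $\mathcal G$ is the \emph{second} multi-line formula, which has a genuinely extra piece: the $\iint_{|z|<|w|}$ integral with the $\kappa$-integral over $[0,\eta_j-\eta_i]$, \emph{plus} the correction term over $[\eta_j-\eta_i,\eta_j]$ in the first integral. Meanwhile, resumming $\mathfrak G$ in the regime $\eta_i\leq\eta_j$, $\tau_i\leq\tau_j$ selects the \emph{second} branch of $\mathfrak G$'s covariance, giving
$$
\left(\tfrac{1}{2\pi i}\right)^2 e^{2\tilde h\tau_j}e^{2h\tau_i}\iint_{|z|>|w|}\exp\!\Big(2\tilde h\big(\tfrac{\tau_j}{\tau_i}\eta_j z^{-1}+\tau_j z\big)\Big)\exp\!\big(2h(\eta_i w^{-1}+\tau_i w)\big)(z-w)^{-2}\,dz\,dw
$$
(after absorbing the $e^{\tau}$-type prefactors appropriately). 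The plan is to pick the simplest nontrivial test case, e.g. take $\eta_i=\eta_j$ (the boundary of the regime, where the $[0,\eta_j-\eta_i]$ piece degenerates) versus $\eta_i<\eta_j$ slightly, or better, extract the coefficient of $\tilde h^1 h^1$ or $\tilde h^0 h^2$ on both sides and observe they disagree — the $\mathfrak G$ side at order $\tilde h h$ is $\oint\oint(\eta_i z^{-1}+\tau_i+\tau_i z)(\eta_j w^{-1}+\tau_j+\tau_j w)(z-w)^{-2}$ which is a fixed rational-function residue, while the $\mathcal G$ side at the same order carries the additional $|z|<|w|$ contribution and the $dt$-integral correction, which do not cancel. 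Computing one residue on each side and checking $\neq$ suffices, since a single counterexample establishes ``in general it does not hold.''

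The main obstacle I anticipate is step (ii) above: proving that the $dt$-correction term in \eqref{SameToDifferent} genuinely vanishes in the space-like regime $\eta_i\geq\eta_j$. The $\oint e^{2h\sqrt{\tau_j-\tau_i}\sqrt\kappa(t+t^{-1})}t^{-2}\,dt$ factor is a generating function for shifted Bessel coefficients, and after the $\kappa$-integration and pairing with $\mathbb E[\mathcal G(\tilde h,\eta_i,\tau_i)\mathcal G(h,\eta_j-\kappa,\tau_i)]$ one must see a cancellation that is invisible unless one tracks the interplay between the $|z|>|w|$ radial ordering and the sign/range of $\eta_i-(\eta_j-\kappa)$. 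I expect this to come down to the identity that a certain $\kappa$-integral of a product of two ``time-$\tau_i$'' covariances against the Bessel kernel telescopes — essentially a semigroup/Chapman–Kolmogorov property of the heat-type kernel $\oint e^{a\sqrt\kappa(t+t^{-1})}t^{-2}d t/\sqrt\kappa$ — and only collapses to zero when the spatial arguments are ordered so that no pole is enclosed. Everything else (the resummation, the equal-time base case, and the single-residue counterexample for the second claim) is routine bookkeeping by comparison.
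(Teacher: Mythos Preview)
Your step (ii) is where the argument breaks. The correction term in \eqref{SameToDifferent} does \emph{not} vanish in the space--like regime $\eta_i\geq\eta_j,\ \tau_i<\tau_j$; if it did, your own step (i) together with \eqref{SameToDifferent} would give
\[
\mathbb{E}[\mathcal{G}(\tilde h,\eta_i,\tau_i)\mathcal{G}(h,\eta_j,\tau_j)]
= e^{2\tilde h\tau_i}e^{2h\tau_j}\iint_{|z|>|w|}
\exp\!\big(2\tilde h(\eta_i z^{-1}+\tau_i z)\big)\,
\exp\!\big(2h(\eta_j w^{-1}+\tau_i w)\big)\,(z-w)^{-2}\,dz\,dw,
\]
which still carries $\tau_i w$ in the $w$--exponent, not the $\tau_j w$ that your resummed $\mathfrak G$ expression requires. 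So the correction term must supply exactly that discrepancy, not zero; the ``reconciliation'' you correctly identify cannot come from a vanishing term.

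The paper's computation makes this explicit. After the substitution $t\mapsto t/\sqrt\kappa$, the $\kappa$--integrand becomes a pure exponential in $\kappa$ and is evaluated in closed form, yielding (after a further rescaling $t\mapsto\sqrt{\tau_j-\tau_i}\,t$) a factor
\[
\frac{e^{2h\eta_j(t^{-1}-w^{-1})}-1}{2h\,t\,(1-tw^{-1})}
\]
inside the $t$--integral. The ``$-1$'' part of the numerator integrates in $t$ by residues at $t=0$ and $t=w$ (using the hypothesis $|t|>|w|$) to $-1+e^{2hw(\tau_j-\tau_i)}$; multiplied against the main term $e^{2h(\eta_j w^{-1}+\tau_i w)}$, this is precisely what upgrades $\tau_i w$ to $\tau_j w$. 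What \emph{does} vanish is the remaining ``$e^{2h\eta_j(t^{-1}-w^{-1})}$'' piece, and for a reason much simpler than any semigroup identity: after combining exponents, the $w^{-1}$--dependence cancels completely, so the $w$--integrand is holomorphic at $w=0$ and the contour integral is zero.

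Your plan for the second assertion (compute one explicit coefficient and show the two sides disagree) is the paper's approach as well; the paper uses the $\tilde h\,h^3$ coefficient. Be careful with very low orders such as $\tilde h h$ or $\tilde h^0 h^2$, which may happen to agree. Also, your displayed resummation of the second branch of $\mathfrak G$ has the roles of $(\tilde h,h)$ and the linear coefficient of $z$ slightly off; re-derive it before using it for the counterexample.
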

\begin{proof}

Assume that $\eta_i\geq\eta_j$ and $\tau_i\leq \tau_j$. By making the substitution $t \mapsto t/\sqrt{\kappa}$, the expression becomes
\begin{multline}
\left( \frac{1}{2\pi i}\right)^2 e^{2\tilde{h}\tau_i}e^{2h\tau_j} \displaystyle \iint_{\vert z\vert > \vert w\vert} \exp \left( 2\tilde{h} (\eta_i z^{-1} + \tau_i z) \right) (z-w)^{-2}dzdw \\
\times \Bigg(\exp  ( 2h (\eta_j w^{-1} + \tau_i w)) + \frac{2h \sqrt{\tau_j-\tau_i}}{2\pi i} \int_0^{\eta_j} e^{2h((\eta_j-\kappa)w^{-1} + \tau_i w)} d\kappa \oint e^{2h\sqrt{\tau_j-\tau_i}(t + \kappa t^{-1}))}t^{-2}dt \Bigg) 
\end{multline}

The integrand in $\kappa$ is simply an exponential function, so evaluates to 
\begin{multline}
\left( \frac{1}{2\pi i}\right)^2 e^{2\tilde{h}\tau_i}e^{2h\tau_j} \displaystyle \iint_{\vert z\vert > \vert w\vert} \exp \left( 2\tilde{h} (\eta_i z^{-1} + \tau_i z) \right) (z-w)^{-2}dzdw \\
\times \Bigg(\exp  ( 2h (\eta_j w^{-1} + \tau_i w)) + \frac{2h \sqrt{\tau_j-\tau_i}}{2\pi i}  \oint\Big( \frac{e^{2h \eta_j (\sqrt{\tau_j-\tau_i}t^{-1}-w^{-1})}-1}{2h(\sqrt{\tau_j-\tau_i}t^{-1}-w^{-1}) } \Big)   e^{2h(\eta_jw^{-1} + \tau_i w)} e^{2ht\sqrt{\tau_j-\tau_i}}t^{-2}dt \Bigg) 
\end{multline}
Substitute $t\mapsto \sqrt{\tau_j-\tau_i}t$
\begin{multline}
\left( \frac{1}{2\pi i}\right)^2e^{2\tilde{h}\tau_i}e^{2h\tau_j}  \displaystyle \iint_{\vert z\vert > \vert w\vert} \exp \left( 2\tilde{h} (\eta_i z^{-1} + \tau_i z) \right) (z-w)^{-2}dzdw \\
\times \Bigg(\exp  ( 2h (\eta_j w^{-1} + \tau_i w)) + \frac{2h}{2\pi i}  \oint\Big( \frac{e^{2h \eta_j (t^{-1}-w^{-1})}-1}{2ht(1-tw^{-1}) } \Big)   e^{2h(\eta_jw^{-1} + \tau_i w)} e^{2ht(\tau_j-\tau_i)}dt \Bigg) 
\end{multline}
By the assumptions on the $t$ and $w$ contours,
$$
\frac{2h}{2\pi i}  \oint\Big( \frac{-1}{2ht(1-tw^{-1}) } \Big)   e^{2ht(\tau_j-\tau_i)}dt  = -1 +   e^{2h w(\tau_j-\tau_i)}
$$
leaving us with
\begin{multline}
\left( \frac{1}{2\pi i}\right)^2 e^{2\tilde{h}\tau_i}e^{2h\tau_j}  \displaystyle \iint_{\vert z\vert > \vert w\vert} \exp \left( 2\tilde{h} (\eta_i z^{-1}+ \tau_i z) \right) (z-w)^{-2}dzdw \\
\times \Bigg(\exp  ( 2h (\eta_j w^{-1} + \tau_j w)) + \frac{1}{2\pi i}  \oint\Big( \frac{e^{2h \eta_j (t^{-1}-w^{-1})}}{t(1-tw^{-1}) } \Big)   e^{2h(\eta_jw^{-1} + \tau_i w)} e^{2ht(\tau_j-\tau_i)}dt \Bigg) 
\end{multline}

So it remains to check that 
\begin{multline}
\left( \frac{1}{2\pi i}\right)^2 e^{2\tilde{h}\tau_i}e^{2h\tau_j}   \displaystyle \iint_{\vert z\vert > \vert w\vert} \exp \left( 2\tilde{h} (\eta_i z^{-1} +\tau_i z) \right) (z-w)^{-2}dzdw \\
\times \Bigg(\frac{1}{2\pi i}  \oint\Big( \frac{e^{2h \eta_j (t^{-1}-w^{-1})}}{t(1-tw^{-1}) } \Big)   e^{2h(\eta_jw^{-1} + \tau_i w)} e^{2ht(\tau_j-\tau_i)}dt \Bigg) =0
\end{multline}
But this follows immediately, because the $w^{-1}$ terms in the exponential cancel, so the integrand has no residues in $w$. So \eqref{Matching} is true.

Now suppose that $\tau_j\geq \tau_i, \eta_j\geq \eta_i$. The $\tilde{h} h^3$ coefficient of the right--hand--side of \eqref{Matching} is
$$
\frac{2\tilde{h}(2h)^3 }{6} \cdot \eta_i \cdot \left( 3\tau_i\tau_j^2 + 3\eta_j\tau_j\tau_i \right) = 8\tilde{h}h^3 \eta_i\tau_i\tau_j(\tau_j + \eta_j)
$$
But on the left--hand--side it is
\begin{multline*}
\frac{16}{3!}\eta_j \cdot \left( 3\tau_i\cdot \tau_i^2 + 3\eta_i\tau_i\cdot \tau_i \right) + 2\sqrt{\tau_j-\tau_i} \int_0^{\eta_j} 4\tau_i\text{min}(\eta_j-\kappa,\eta_i) d\kappa \cdot 2\sqrt{\tau_j-\tau_i}\\
= \frac{16}{6}\eta_j \cdot \left( 3\tau_i\cdot \tau_i^2 + 3\eta_i\tau_i\cdot \tau_i \right) + 16(\tau_j-\tau_i)\tau_i\left( -\tfrac{1}{2}(\eta_j-\kappa)^2 \vert_{\kappa=\eta_j-\eta_i}^{\eta_j}+\eta_i(\eta_j-\eta_i)\right)\\
=8 \eta_j \cdot \left( \tau_i\cdot \tau_i^2 + \eta_i\tau_i\cdot \tau_i \right) + 16(\tau_j-\tau_i)\tau_i\left( \tfrac{1}{2}\eta_i^2 +\eta_i(\eta_j-\eta_i)\right)  
\end{multline*}
which is not equal to the expression above.
\end{proof}

\section{Background Definitions}\label{Back}

\subsection{Non--commutative probability}\label{Ncp}
Here are some of the basic definitions of objects in non-commutative probability. A more comprehensive introduction can be found in \cite{B2}.

A non--commutative probability space $(W,\omega)$ is a unital $^*$--algebra 
$W$ with identity $1$ and a state $\omega:W\rightarrow\C$, that is, a linear map such that $\omega(a^*a)\geq 0$ and $\omega(1)=1$. Elements of $W$ are 
called \textit{non--commutative random variables}. 
This generalizes a classical probability space, by considering $W=L^{\infty}(\Omega,\mathcal{F},\mathbb{P})$ with $\omega(X)=\E_{\mathbb{P}}X$. We also need a notion of convergence. For a 
large parameter $L$ and $a_1,\ldots,a_r\in (W,\omega)$ which depend on $L$, as well as a limiting space $(\mathbb{A},\Phi)$, we say that $(a_1,\ldots,a_r)$ converges to $\mathbf{(a_1,\ldots,a_r)}$ with respect to the state $\omega$ if
$$
\omega(a_{i_1}^{\epsilon_1}\cdots a_{i_k}^{\epsilon_k})\rightarrow \Phi(\mathbf{a_{i_1}^{\epsilon_1}\cdots a_{i_k}^{\epsilon_k}})
$$
for any $i_1,\ldots,i_k\in\{1,\ldots,r\},\epsilon_j\in\{1,*\}$ and $k\geq 1$.

There is also a non--commutative version of a Markov chain. If $X_n:(\Omega,\mathcal{F},\mathbb{P})\rightarrow E$ denotes the Markov process with transition operator $P :L^{2}(E)\rightarrow L^{2}(E)$, then the Markov property is
$$
\E[Yf(X_{n+1})]=\E[Y Pf(X_n)]
$$
for $f\in L^{2}(E)$ and $Y$ a $\sigma(X_1,\ldots,X_n)$--measurable random variable. Letting $j_n:L^{2}(E)\rightarrow L^{2}(\Omega,\mathcal{F},\mathbb{P})$ be defined by $j_n(f)=f(X_n)$, we can write the Markov property as
$$
\E[Y j_{n+1}(f)]=\E[Y j_n(Pf)]
$$
for all $f\in L^{2}(E)$ and $Y$ in the subalgebra of $L^{\infty}(\Omega,\mathcal{F},\mathbb{P})$ generated by the images of $j_0,\dots,j_n$. 

Translating into the non--commutative setting, we define a \textit{non--commutative Markov operator} to be a semigroup of unital linear maps $\{P_t:t\in T\}$ from a $^*$--algebra $U$ to itself (not necessarily an algebra morphism). In general, the set $T$ indexing time can be either 
$\N$ or $\R_{\geq 0}$. We also require that for any times $t_0<t_1<\ldots\in T$ there exists algebra morphisms  $j_{n}$ from  $U$ to a non--commutative probability space $(W,\omega)$ such that
$$
\omega(j_{n}(f)w)=\omega(j_{{n-1}}(P_{t_n-t_{n-1}}f)w)
$$
for all $f\in U$ and $w$ in the subalgebra of $W$ generated by the images of $\{j_t:t\leq t_{n-1}\}$.

For this paper, the indexing set will be $\R_{\geq 0}$, but we will fix an increasing sequence of times $t_0 < t_1 < \ldots $. In the commutative framework, the analog is a Markov chain $(X_n)_{n \in \mathbb{N}}$ such that $\mathbb{P}(X_{n+1}=y \vert X_n = x) = P_{t_{n+1}-t_n}(x,y)$, where $(P_t)_{t \in \mathbb{R}_{\geq 0}}$ is a probability semigroup.

\subsection{Representation theory}

\subsubsection{Definition of Quantum Groups}\label{DoQG}

This sub--subsection defines the quantum groups. See \cite{J} for a more thorough treatment. 

Let $q$ be a formal variable. The quantum group $\Uq$ is the Hopf algebra with generators  $\{ E_{i,i+1},E_{i+1,i}: 1\leq i \leq n-1\},\{q^{E_{ii}}:1\leq i\leq n\}$ satisfying the relations (below, $q^{E_{ii}}$ are all invertible and the multiplication is written additively in the exponential, so for example $q^{-E_{11} + 2E_{22}}= \left( q^{E_{11}}\right)^{-1} \left( q^{E_{22}}\right)^2$) 
$$
q^{E_{ii}}q^{E_{jj}} = q^{E_{jj}} q^{E_{ii}} = q^{E_{ii}+E_{jj}}
$$
$$
[E_{i,i+1},E_{i+1,i}] = \frac{q^{E_{ii}-E_{i+1,i+1}}-q^{E_{i+1,i+1}-E_{ii}}}{q-q^{-1}} \quad  \quad  [E_{i,i+1}E_{j+1,j}] =0, \quad i\neq j
$$
\begin{align*}
q^{E_{ii}}E_{i,i+1} &= q E_{i,i+1}q^{E_{ii}} \quad \quad q^{E_{ii}}E_{i-1,i} = q^{-1} E_{i-1,i} q^{E_{ii}}  \quad \quad [q^{E_{ii}},E_{j,j+1}]=0, \quad j\neq i,i-1\\
q^{E_{ii}}E_{i,i-1} &= q E_{i,i-1}q^{E_{ii}} \quad \quad q^{E_{ii}}E_{i+1,i} = q^{-1} E_{i+1,i} q^{E_{ii}} \quad \quad [q^{E_{ii}},E_{j,j-1}]=0, \quad j\neq i,i+1\
\end{align*}
\begin{align*}
E_{i,i+1}^2E_{j,j+1} - (q+q^{-1})E_{i,i+1}E_{j,j+1}E_{i,i+1} +  E_{j,j+1}E_{i,i+1}^2 = 0 , \quad & i = j\pm 1\\
E_{i,i-1}^2E_{j,j-1} - (q+q^{-1})E_{i,i-1}E_{j,j-1}E_{i,i-1} +  E_{j,j-1}E_{i,i-1}^2 = 0 , \quad & i = j\pm 1\\
[E_{i,i+1},E_{j,j+1} ]= 0 = [E_{i,i-1},E_{j,j-1}] , \quad & i \neq j\pm 1
\end{align*}
The co--product is an algebra morphism $\Delta: \Uq \rightarrow \Uq \otimes \Uq$ defined by
\begin{align*}
\Delta\left(q^{E_{ii}}\right) &= q^{E_{ii}} \otimes q^{E_{ii}} \\
\Delta\left( E_{i,i+1} \right) &= q^{E_{ii}-E_{i+1,i+1}} \otimes E_{i,i+1} + E_{i,i+1} \otimes 1 \\
\Delta\left( E_{i,i-1} \right) &= 1 \otimes E_{i,i-1} + E_{i,i-1} \otimes q^{E_{i+1,i+1}-E_{ii}}
\end{align*}
{\color{black}
The multiplication on $\Uq$ is defined by the usual map $m: \Uq^{\otimes 2} \rightarrow \Uq$ sending $v_1  \otimes v_2$
 to $v_1v_2$. By associativity, for every $r\geq 1$ there is a unique algebra morphism $m_r:=m \circ (m_{r-1} \otimes \mathrm{id})=m \circ (\mathrm{id} \otimes m_{r-1})$ from $ \Uq^{\otimes r} $ to $ \Uq$ sending $v_1  \otimes v_2 \otimes \cdots \otimes v_r$ to $v_1 \cdots v_r$.
}

Note that unless $q\rightarrow 1$, $\Delta$ does not satisfy co--commutativity. In other words, if $P$ is the permutation $P(a\otimes b)=b\otimes a$, then $P\circ\Delta \neq \Delta$. However, the co--product does satisfy the co--associativity property
$$
(\mathrm{id} \otimes \Delta) \circ \Delta = (\Delta \otimes \mathrm{id}) \circ \Delta,
$$
so that there is a well--defined algebra morphism $\Delta^{(m-1)} : \Uq \rightarrow \Uq^{\otimes m}$ satisfying $\Delta^{(m)} = (\mathrm{id} \otimes \Delta^{(m-1)}) \circ \Delta = (\Delta^{(m-1)} \otimes \mathrm{id}) \circ \Delta$.  More explicitly,
\begin{align*}
\Delta^{(k-1)}\left(q^{E_{ii}}\right) &= \underbrace{q^{E_{ii}} \otimes q^{E_{ii}} \otimes \cdots \otimes q^{E_{ii}}}_{k}\\
\Delta\left( E_{i,i+1} \right) &= \sum_{i=1}^k \underbrace{ q^{E_{ii}-E_{i+1,i+1}} \otimes \cdots \otimes q^{E_{ii}-E_{i+1,i+1}}}_{i-1} \otimes E_{i,i+1} \otimes \underbrace{1 \otimes \cdots \otimes 1}_{k-i} \\
\Delta\left( E_{i,i-1} \right) &= \sum_{i=1}^k \underbrace{1 \otimes \cdots \otimes 1}_{i-1} \otimes E_{i,i-1} \otimes  \underbrace{ q^{E_{i+1,i+1}-E_{ii}}\otimes q^{E_{i+1,i+1}-E_{ii}}}_{k-i}
\end{align*}
We use the notation $\Delta(u) = \sum_{(u)} u_{1}\otimes u_{2}$. 
This notation will extend to 
$$
\Delta^{(n-1)}(u) = \sum_{(u)}  u_{1} \otimes \cdots \otimes u_{n} 
$$
and
$$
(\mathrm{id} \otimes \Delta) \circ \Delta(u) = \sum_{(u)}\sum_{(u_2)}  u_{1} \otimes u_{21}\otimes u_{22} \quad  \quad (\Delta \otimes \mathrm{id}) \circ \Delta(u) = \sum_{(u)}\sum_{(u_1)}u_{11} \otimes u_{12} \otimes u_{2}
$$
For completeness, the antipode $S$ is an anti--automorphism on $\Uq$ defined on generators by
$$
S(E_{i,i+1}) = -q^{-1} E_{i,i+1} \quad S(E_{i,i-1}) = -q E_{i,i-1}, \quad S(q^{E_{ii}})=q^{-E_{ii}}.
$$
and the co--unit is an algebra morphism $\epsilon: \Uq \rightarrow \mathbb{C}$ defined on generators by
$$
\epsilon\left(q^{E_{ii}}\right)=1 \quad \epsilon\left( E_{i,i\pm 1}\right)=0.
$$
The antipode will only be used in the remark at the end of section \ref{QRWonQG}, and the co--unit will not be used explicitly.

{\color{black}
The quantum group $\Uq$ also has the structure of a $\mathbb{Z}^n$--graded algebra. The grading is defined by setting
\begin{align*}
\deg(q^{E_{ii}})=0, & \quad 1 \leq i \leq n, \\
 \deg(E_{ij})= \epsilon_i - \epsilon_j, & \quad i\neq j,
\end{align*}
where $\epsilon_i = (0,\ldots,0,1,0,\ldots,0)\in \mathbb{Z}^n$, with the $1$ located in the $i$th location. For any $\mu \in \mathbb{Z}^n$, let $\Uq[\mu]$ denote the homogeneous elements with degree $\mu$.
}

For any $1 \leq i \neq j \leq n$, define $E_{ij}$ inductively by
$$
E_{ij} = E_{ik}E_{kj} - q^{-1}E_{kj}E_{ik}, \quad i\lessgtr k \lessgtr j
$$
From the relations defining $\Uq$, it is not hard to see that for $1\leq i<j\leq n$,
\begin{equation}\label{CoProd}
\begin{aligned}
\Delta E_{ij} &= E_{ij} \otimes 1 +q^{E_{ii}-E_{jj}} \otimes E_{ij}+(q - q^{-1})\left [ \sum_{r=i+1}^{j-1} (q^{E_{rr}-E_{jj}} E_{ir}) \otimes E_{rj}  \right ] \\
\Delta E_{ji} &=1 \otimes E_{ji}  + E_{ji} \otimes q^{E_{jj}-E_{ii}} + (q - q^{-1}) \left [ \sum_{r=i+1}^{j-1} E_{ri} \otimes (q^{E_{rr}-E_{ii}}E_{jr})\right ].
\end{aligned}
\end{equation}

The quantum group $\mathcal{U}_q(\mathfrak{gl}_n)$ also carries the structure of a Hopf $^*$--algebra; see \cite{KSBook} {\color{black}or \cite{KoS}} for a more general treatment. Given an involution $^*$ on a Hopf algebra $U$ over $\mathbb{C}$ which makes it a $^*$--algebra, we say that $U$ is a Hopf $^*$--algebra if $\Delta(a^*)=\Delta(a)^*$ and $\epsilon(a^*)=\overline{\epsilon(a)}$ for all $a \in U$. For {\color{black} nonzero} real values of $q$, an explicit expression for the involution $^*$ is given in Proposition 17 of section 6.1.7 of \cite{KSBook}:
$$
\left( q^{E_{ii}}\right)^* = q^{E_{ii}}, \quad E_{i,i+1}^* = q^{E_{ii}-E_{i+1,i+1}}E_{i+1,i}, \quad E_{i+1,i}^* = E_{i,i+1}q^{E_{i+1,i+1}-E_{ii}}.
$$
For any central element $C\in Z(\Uq)$, we have that $C^*=C$.

Note that since the generators of $\Uq$ are a subset the generators of $\mathcal{U}_q(\mathfrak{gl}_m)$ for $n < m$, there is a canonical embedding of $\Uq$ into $\mathcal{U}_q(\mathfrak{gl}_m)$.

\subsubsection{Representations}\label{3.2.2}
The finite--dimensional irreducible representations of $\Uq$ are parameterized by non--increasing sequences of $n$ integers
$$
\mathbb{GT}_n := \{ (\lambda_1 \geq \ldots \geq \lambda_n): \lambda_i \in \mathbb{Z}\}
$$
For each $\lambda \in \mathbb{GT}_n$, let {\color{black} $\pi^{(q)}_{\lambda}: \Uq \rightarrow \mathrm{End}(V_{\lambda})$ denote the corresponding representation. There is a \textit{weight space decomposition}
$$
V_{\lambda} =  \bigoplus_{\mu} V_{\lambda}[\mu]
$$
where $\mu$ is some sequence of integers $\mu=(\mu_1,\ldots,\mu_n)\in \mathbb{Z}^n$ (not necessarily non--increasing) and
$$
V_{\lambda}[\mu] = \{ v \in V_{\lambda}: q^{a_1E_{11} + \ldots + a_nE_{nn}}v = q^{a_1\mu_1 + \ldots + a_n\mu_n} v\}
$$
One can think of the weight spaces as a generalization of eigenspaces. Given any complex numbers $x_1,\ldots,x_n$ there is an action on $V_{\lambda}$ by 
$$
x_1^{E_{11}}\cdots x_n^{E_{nn}} v = x_1^{\mu_1} \cdots x_n^{\mu_n}v \quad \text{ for }  v \in V_{\lambda}[\mu].
$$ 
and write $x^Ev$ for the left--hand--side. With this notation, define the \textit{character} $\chi_{\lambda}$ as
\begin{equation}\label{Chi}
\chi_{\lambda}(x_1,\ldots,x_n) = \mathrm{Tr}\vert_{V_{\lambda}}\left( x^E\right).
\end{equation}
Let $\dim\lambda = \chi_{\lambda}(1,\ldots,1)$ denote the dimension of $V_{\lambda}$. Each $\chi_{\lambda}$ is a symmetric polynomial and the $\{\chi_{\lambda}\}$ form a basis for the ring of symmetric polynomials in $n$ variables. In fact, these are the Schur polynomials $s_{\lambda}$.

The co--product defines the action on tensor products of representations, in the sense that if $v,w$ are vectors in two different representations, then
$$
u \cdot (v\otimes w) = u_1v \otimes u_2w.
$$
In particular,
\begin{equation}\label{Tense}
\pi_{\lambda \otimes \mu}^{(q)}(u) = \pi_{\lambda}^{(q)}(u_1) \otimes \pi_{\mu}^{(q)}(u_2).
\end{equation}

There are also branching rules between representations of $\Uq$ and $\mathcal{U}_q(\mathfrak{gl}_{n-1})$. For $\lambda \in \mathbb{GT}_n$ and $\mu \in \mathbb{GT}_{n-1}$, let $\mu \prec \lambda$ mean 
$$
\mu \prec \lambda \text{   if and only if   } \lambda_1 \geq \mu_1 \geq \lambda_2 \geq \mu_2 \geq \ldots \geq \mu_{n-1} \geq \lambda_n.
$$
If $V_{\lambda}$ is restricted to $\mathbb{GT}_{n-1}$ then it decomposes as 
$$
V_{\lambda} = \bigoplus_{\mu \prec\lambda} V_{\mu}.
$$
More generally, if $m\leq n$, $\lambda^{(m)} \in \mathbb{GT}_m$ and $\lambda^{(n)} \in \mathbb{GT}_n$, then let $m\left(\lambda^{(n)},\lambda^{(m)}\right)$ denote the multiplicities of $V_{\lambda^{(n)}}$ restricted to $\mathbb{GT}_m$, that is
$$
V_{\lambda^{(n)}} = \bigoplus_{\lambda^{(m)} \in \mathbb{GT}_m} m\left(\lambda^{(n)},\lambda^{(m)}\right) V_{\lambda^{(m)}} 
$$
which also means that
$$
\chi_{\lambda^{(n)}}(x_1,\ldots,x_m,1,\ldots,1) = \sum_{\lambda^{(m)} \in \mathbb{GT}_m} m\left(\lambda^{(n)},\lambda^{(m)}\right) \chi_{\lambda^{(m)}}(x_1,\ldots,x_m)
$$
By setting $x_1 = \ldots = x_n=1$, this shows that
\begin{equation}\label{Gibbs}
\sum_{\lambda^{(m)} \in \mathbb{GT}_m} \Lambda\left(\lambda^{(n)},\lambda^{(m)}\right) =1, \quad \text{ where } \Lambda\left(\lambda^{(n)},\lambda^{(m)}\right) =m\left(\lambda^{(n)},\lambda^{(m)}\right) \frac{\dim\lambda^{(m)}}{\dim\lambda^{(n)}}
\end{equation}

The branching rules allow for the representation $\pi_{\lambda}^{(q)}$ to be written explicitly; the exposition here comes from section 7.3.3 of \cite{KSBook}. More specifically, the carrier space $V_{\lambda}$ has a basis formed by successive restrictions to subalgebras $\mathcal{U}_q(\mathfrak{gl}_{n-1}),\ldots, \mathcal{U}_q(\mathfrak{gl}_1)$. By the branching rule, each $V_{\mu}$ appears in $V_{\lambda}$ exactly once. Since the irreducible representations of $\mathcal{U}_q(\mathfrak{gl}_1)$ are one--dimensional, we obtain a basis of $V_{\lambda}$ indexed by Gelfand--Tsetlin patterns, which are arrays of integers $m_{ij}$ satisfying
$$
m_{i,j+1} \geq m_{ij} \geq m_{i+1,j+1}, \quad 1 \leq i \leq j \leq n
$$
with $m_{i,n}=\lambda_i$. These can be visualized as
$$
\mathcal{M}=\left(\begin{array}{ccccccccc}
m_{1, n} & & m_{2, n} & & \cdots & \cdots & & m_{n,n} \\
 & m_{1, n-1} & & m_{2, n-1} & \cdots & & m_{n-1, n-1}   \\ & & \cdots & \cdots & \cdots & \cdots & & &  \\ & & & & & m_{11} & & & \end{array}\right)
$$
 Then $\pi^{(q)}_{\lambda}$ is given by (see Theorem 24 of section 7.3.3 of \cite{KSBook})
 $$
\begin{array}{c}\pi_{\lambda}^{(q)}\left(q^{E_{kk}}\right)|\mathcal{M}\rangle=q^{a_{k} }|\mathcal{M}\rangle, \quad a_{k}=\sum_{i=1}^{k} m_{i, k}, \quad 1 \leq k \leq n \\ \pi_{\lambda}^{(q)}\left(E_{k,k+1}\right)|\mathcal{M}\rangle=\sum_{j=1}^{k} A_{k}^{j}(q,\mathcal{M})\left|\mathcal{M}_{k}^{j}\right\rangle, \quad \pi_{\lambda}^{(q)}\left(E_{k+1,k}\right)|\mathcal{M}\rangle=\sum_{j=1}^{k} A_{k}^{j}\left(q,\mathcal{M}_{k}^{-j}\right)\left|\mathcal{M}_{k}^{-j}\right\rangle, 1 \leq k \leq n-1. \\ \end{array}
$$
Here, $\mathcal{M}_k^{\pm j}$ is the Gelfand--Tsetlin pattern obtained from $\mathcal{M}$ by replacing $m_{jk}$ with $m_{jk} \pm 1$, and $A_k^j(q,\mathcal{M})$ is the expression
$$
A_{k}^{j}(q,\mathcal{M})=\left(-\frac{\prod_{i=1}^{k+1}\left[l_{i, k+1}-l_{j, k}\right]_q \prod_{i=1}^{k-1}\left[l_{i, k-1}-l_{j, k}-1\right]_q}{\prod_{i \neq j}\left[l_{i, k}-l_{j, k}\right]_q\left[l_{i, k}-l_{j, k}-1\right]_q}\right)^{1 / 2},
$$
where $l_{ir}=m_{ir}-i$, the positive value of the square root is taken, and $[m]_q=(q^m-q^{-m})/(q-q^{-1})$. Multiplying the basis elements by appropriate factors gives an explicit formula for $\pi_{\lambda}^{(q)}$ for any complex value of $q$ which is not a root of unity. Note that the Gelfand--Tsetlin basis does not depend on $q$. Therefore, $\pi_{\lambda}^{(q)}$ and $\pi_{\lambda}^{(\tilde{q})}$ can be composed for different values of $q,\tilde{q}$. For example, 
$$
\pi_{\lambda}^{(q)}(E_{k,k+1}) \pi_{\lambda}^{(\tilde{q})}(E_{l,l+1}) \vert \mathcal{M}\rangle = \sum_{j=1}^k \sum_{j'=1}^l A_k^j(q, \mathcal{M}_{j'}^l) A_l^{j'}(\tilde{q},\mathcal{M}) \vert (\mathcal{M}_{j'}^l)^k_j\rangle.
$$
This explicit expression of $\pi_{\lambda}^{(q)},\pi_{\mu}^{(q)}$ also extends to external direct sums $V_{\lambda} \oplus_{\mathbb{C}} V_{\mu}$, by
$$
( \pi_{\lambda}^{(q)}(u) \oplus \pi_{\mu}^{(q)}(u))\left(\vert \mathcal{M}\rangle \oplus \vert \mathcal{M}' \rangle\right)= \pi_{\lambda}^{(q)}(u) \vert \mathcal{M}\rangle \oplus \pi_{\mu}^{(q)}(u) \vert \mathcal{M}'\rangle.
$$
Again, $ \pi_{\lambda}^{(q)}(u) \oplus \pi_{\mu}^{(q)}(u)$ can be composed with  $\pi_{\lambda}^{(\tilde{q})}(u) \oplus \pi_{\mu}^{(\tilde{q})}(u)$ for $q \neq \tilde{q}$.

\begin{remark}
If $\iota_q: W \rightarrow V_{\lambda} \oplus V_{\mu}$ is an isomorphism of $\mathcal{U}_q(\mathfrak{gl}_n)$--modules, then $\iota_q$ in general will not intertwine with $\pi_{\lambda}^{(\tilde{q})} \oplus \pi_{\mu}^{(\tilde{q})}$ unless $q=\tilde{q}$. However, this does not present an issue for our purposes in this paper.
\end{remark}

Because $\mathcal{U}_q(\mathfrak{gl}_n)$ also carries the structure of a $^*$--algebra, the notion of a $^*$--representation is also needed. A representation $\pi$ of a $^*$--algebra $U$ on a vector space $V$ with a scalar product is called a $^*$--representation if
\begin{equation}\label{Star}
\langle \pi(a)v,w\rangle = \langle v, \pi(a^*)w\rangle
\end{equation}
for all $a\in U$ and $v,w\in V$. Every finite--dimensional irreducible representation of $\mathcal{U}_q(\mathfrak{gl}_n)$ also carries the structure of a $*$--representation; see Chapter 7 of \cite{KSBook}. 

{\color{black} 
\begin{remark} We note that the irreducible representations of $\mathcal{U}_q(\mathfrak{gl}_n)$, as well as their weight space decompositions, can be taken independently of $q$. This fact will be implicitly used later when we multiply difference operators corresponding to different values of $q$. As mentioned in the introduction, in the $q\rightarrow 1$ limit this fact also explains why the Gaussian fields $\mathfrak{G}$ and $\mathcal{G}$ match along space--like paths.
\end{remark}
}

{\color{black}
\subsubsection{Dual Hopf Algebra}
Recall some general definitions; see e.g. \cite{KoS}. If $A$ is a complex Hopf algebra, then the matrix elements of finite--dimensional $A$--modules form a Hopf subalgebra $A^*$ in the dual algebra $\mathrm{Hom}_{\mathbb{C}}(A,\mathbb{C})$. For any Hopf algebra, the right and left regular representations are the actions of $A$ on $A^*$ given by
\begin{align*}
\mathcal{R}: f \mapsto & \langle \mathrm{id} \otimes a, \Delta(f) \rangle, \\
\mathcal{L}: f \mapsto & \langle S^{-1}(a)\otimes \mathrm{id}, \Delta(f)\rangle.
\end{align*}

For nonzero real values of $q$, the quantum group $\mathcal{U}_q(\mathfrak{gl}_n)$ has a dual Hopf $*$--algebra, which we will denote $\mathbb{C}[GL(n)]_q$, following \cite{KoS}. This algebra is variously called {the quantized algebra of regular functions}, or the {quantum coordinate algebra}, or {the algebra of regular functions on the quantum group}. It can be viewed as a $\Uq \otimes \Uq$--bimodule under the action $\mathcal{L} \otimes \mathcal{R}$. As a bi--module, it has the Peter--Weyl decomposition
$$
\mathbb{C}[GL(n)]_q \cong \bigoplus_{\lambda \in \mathbb{GT}_n} V_{\lambda}^{\oplus \dim \lambda}
$$
with the isomorphism given by
$$
c_{l,v}^{V_{\lambda}} \mapsto l \otimes v,
$$
where $c_{l,v}^{V_{\lambda}}$ denotes the matrix element of type $l \in V_{\lambda}^*$ and $v \in V_{\lambda}$. (Recall that $V_{\lambda}^* \otimes V_{\lambda} $ is isomorphic to the space of endomorphisms $ \mathrm{End}(V_{\lambda},V_{\lambda})$).

In the classical case, the Peter--Weyl decomposition gives
$$
L^2(U(n)) = \bigoplus_{\lambda \in \mathbb{GT}_n} V_{\lambda}^{\oplus \dim \lambda}.
$$
Therefore, we can define the von Neumann algebra $M$ of operators in $\mathrm{End}_{\mathbb{C}}(L^2(U(n)))$ which preserve each summand in the Peter--Weyl decomposition. (This is the same definition that had been previously used in \cite{K}). Since $L^2(U(n))$ is a $\Uq \otimes \Uq$--bimodule for all nonzero real values of $q$, we have morphisms
$$
\mathcal{L} \otimes \mathcal{R} : \Uq \otimes \Uq \rightarrow M.
$$
Because we have two morphisms $\Uq \rightarrow \Uq \otimes \Uq$ defined by $u\mapsto u\otimes 1$ and $u\mapsto 1\otimes u$, these can be composed with $\mathcal{L}\otimes \mathcal{R}$ to obtains two morphisms from $\Uq$ to $M$. Equivalently, we have a left action and a right action of $\Uq$ on its dual $\mathbb{C}[GL(n)]_q$. Let $\pi^{(q)}(u) \in M$ denote the right action of $u\in \Uq$. In other words, $\pi^{(q)}(u)$ acts on $L^2(U(n))$ as 
\begin{equation}\label{New}
\pi^{(q)}(u) =  \bigoplus_{\lambda \in \mathbb{GT}_n} \left(\pi^{(q)}_{\lambda}\right)^{\oplus \dim \lambda}.
\end{equation}

Note that because every finite--dimensional representation of $\Uq$ is also a $*$--representation, we have that
\begin{equation}\label{Stars}
\pi^{(q)}(u^*) = \pi^{(q)}(u)^* .
\end{equation}
}

For $m<n$, the right action of $u \in \mathcal{U}_q(\mathfrak{gl}_m)$ on $\mathbb{C}[GL(n)]_q$ is well--defined: because $\mathcal{U}_q(\mathfrak{gl}_m) \subset \mathcal{U}_q(\mathfrak{gl}_n)$, the representation $V_{\lambda}$ to can be restricted to $\mathcal{U}_q(\mathfrak{gl}_m)$. Thus, $u \in \mathcal{U}_q(\mathfrak{gl}_m) $ has a well--defined right action on $V_{\lambda}$, so we have a morphism from $\mathcal{U}_q(\mathfrak{gl}_m) $ to $M$.

\section{Quantum Random Walks on Quantum Groups}\label{QRWonQG}
Before defining the quantum random walk, first define is the states. For $t\geq 0$ let $\chi^t(x_1,\ldots,x_n)$ denote
$$
\chi^t(x_1,\ldots,x_n) = e^{t(x_1-1 + \cdots +x_n-1)} = e^{-tn} e^{t(x_1+\cdots + x_n)}.
$$
By a result of \cite{BF,BK} (see also section \ref{Ctcp} below),
$$
\chi^t(x_1,\ldots,x_n) = \sum_{\lambda \in \mathbb{GT}_n} P(\lambda,t) \frac{s_{\lambda}(x_1,\ldots,x_n)}{s_{\lambda}(1,\ldots,1)}
$$
for some non--negative coefficients $P(\lambda,t)$. Since $s_{\lambda}$ is the character of the representation $V_{\lambda}$, this motivates the following definition. Since $D\in M$ preserves each summand in the Peter--Weyl decomposition, we can define a state on $M$ by
$$
\langle D \rangle_t = \sum_{\lambda \in \mathbb{GT}_n} P(\lambda,t) \frac{\mathrm{Tr}|_{V_{\lambda}}(D)}{\dim V_{\lambda}}.
$$
We can also define a state on $\Uq$  by 
\begin{equation}\label{Du}
\langle u \rangle_{t} =  \langle\pi^{(q)}(u) \rangle_t
\end{equation}
The positivity of these states follows immediately from the fact that $\mathrm{Tr}(AA^*)\geq 0$ for all matrices $A$, and that $\pi^{(1)}$ is a $^*$--morphism.

Now that the states have been defined, we define the non--commutative random walk.  Fix times $t_1<t_2<\ldots$.  Let $\mathcal{W}$ be the direct limit
$
\varinjlim M^{\otimes k},
$
where $M^{\otimes k} \rightarrow M^{\otimes k+1}$ maps $w_1 \otimes \cdots \otimes w_k \mapsto w_1 \otimes \cdots \otimes w_k \otimes 1$. Define the state $\omega:=\state{\cdot}_{t_1} \otimes \state{\cdot}_{t_2-t_1} \otimes \cdots$ on $\mathcal{W}$. 
For $k\geq 1$ define the map $j_{k}:\Uq \rightarrow \mathcal{W}$ to be the map 
$$
j_{k}(u) = [\pi^{(q)}]^{\otimes k}(\Delta^{(k-1)}u) \otimes \mathrm{Id}^{\otimes \infty} =  \sum_{(u)} \pi^{(q)}{(u_{1})} \otimes \cdots \otimes \pi^{(q)}{(u_k)} \otimes \mathrm{Id}^{\otimes\infty}.
$$ 
and let $\mathcal{W}_k$ be the subalgebra generated by the images of $j_{1},\ldots,j_{k}$. Let $P^{}_t$ be the non--commutative Markov operator on $\Uq$ defined by $P_t=(\mathrm{id} \otimes \langle \cdot \rangle_t)\circ\Delta$.  

Now fix real values $q_1,q_2,\ldots$; we will now define states and non--commutative random walk that combine these different values of $q$. Before giving the rigorous definitions, first let us provide some intuition. In the case where all $q_j=q$, we know that since $\pi^{(q)}$ is a morphism, \eqref{Du} implies that for $u^{(1)},\ldots,u^{(r)} \in \Uq$,
$$
\state{ m_r(u^{(1)} \otimes \cdots u^{(r)})  }_t = \state{  \pi^{(q_1)}(u^{(1)}) \circ \cdots \circ \pi^{(q_r)}({u^{(r)}})  }_{t} ,
$$
where $m_r$ is the multiplication map. Similarly, 
$$
j_k(m_r(u^{(1)} \otimes \cdots u^{(r)})) = \sum_{u^{(1)},\ldots, u^{(r)}} [\pi^{(q)}(u^{(1)}_1) \circ \cdots \circ \pi^{(q)}(u^{(r)}_1)] \otimes \cdots \otimes [\pi^{(q)}(u^{(1)}_k) \circ \cdots \circ \pi^{(q)}(u^{(r)}_k)].
$$
and
$$
P_t(m_r(u^{(1)} \otimes \cdots \otimes  u^{(r)})) =  \sum_{u^{(1)},\ldots, u^{(r)}} \state{m_r(u^{(1)}_2 \otimes \cdots \otimes u^{(r)}_2)}_t m_r(u^{(1)}_1 \otimes \cdots \otimes u^{(r)}_2).
$$
If the values of $q_j$ differ, then there is no map $m_r$; however, it turns out that formally removing $m_r$ does not form an impediment to constructing the non--commutative random walk. 

Before continuing, let us first prove a lemma:
\begin{lemma}
Let $\lambda \in \mathbb{GT}_n$ and $n_1,n_2\leq n$. If $u_1$ and $u_2$ are elements in the centers of $\mathcal{U}_{q_1}(\mathfrak{gl}_{n_1}), \mathcal{U}_{q_2}(\mathfrak{gl}_{n_2})$ respectively, then
\begin{equation*}
\pi_{\lambda}^{(q_1)}(u^{(1)}) \circ \pi_{\lambda}^{(q_2)}(u^{(2)}) = \pi_{\lambda}^{(q_2)}(u^{(2)}) \circ \pi_{\lambda}^{(q_1)}(u^{(1)}) ).
\end{equation*}
Furthermore, 
\begin{equation}\label{Comm}
\pi_{}^{(q_1)}(u^{(1)}) \circ \pi_{}^{(q_2)}(u^{(2)}) = \pi_{}^{(q_2)}(u^{(2)}) \circ \pi_{}^{(q_1)}(u^{(1)}) )
\end{equation}
\end{lemma}
\begin{proof}
The fact that $\pi_{\lambda}^{(q_1)}(u^{(1)})$ can be composed with $\pi_{\lambda}^{(q_2)}(u^{(2)})$ follows from the discussion in section \ref{3.2.2} about the Gelfand--Tsetlin basis. Since diagonal matrices commute with each other, it suffices to show that $\pi_{\lambda}^{(q_i)}(u^{(i)})$ acts as a diagonal matrix with respect to the Gelfand--Tsetlin basis. Without loss of generality take $i=1$. By the explicit expression for $\pi_{\lambda}^{(q)}$ in section \ref{3.2.2}, the action of any element of $\mathcal{U}_{q_1}(\mathfrak{gl}_{n_1})$ on $\vert \mathcal{M}\rangle$ only depends on the bottom $n_1$ rows of $\mathcal{M}$. Combined with the fact that any central element of $\mathcal{U}_{q_1}(\mathfrak{gl}_{n_1})$ acts as a constant on any irreducible representation of $\mathcal{U}_{q_1}(\mathfrak{gl}_{n_1})$, this means that $\pi_{\lambda}^{(q_1)}(u^{(1)}) \vert \mathcal{M} \rangle = C(q_1,u^{(1)},m_{1,n_1},\ldots,m_{n_1,n_1})\vert \mathcal{M} \rangle$ for all $\mathcal{M}$. Here, $C(q_1,u^{(1)},m_{1,n_1},\ldots,m_{n_1,n_1})$ is a constant that only depends on the listed variables. In particular, $\pi_{\lambda}^{(q_1)}(u^{(1)})$ acts as a diagonal matrix with respect to the Gelfand--Tsetlin basis, as needed.

The second part of the lemma follows from \eqref{New} and the first part of the lemma.

\end{proof}
   Given $u^{(1)} \in \mathcal{U}_{q_1}(\mathfrak{gl}_{n_1}),\ldots,u^{(r)} \in \mathcal{U}_{q_r}(\mathfrak{gl}_{n_r})$, {\color{black} define}
\begin{equation}\label{Defn}
\state{ u^{(1)} \ro u^{(2)} \ro \cdots \ro u^{(r)}}_{t} = \state{  \pi^{(q_1)}(u^{(1)}) \circ \cdots \circ \pi^{(q_r)}({u^{(r)}})  }_{t}.
\end{equation}
The order of the composition on the right--hand--side matters in general; however, if every $u_i$ is central in $\mathcal{U}_{q_i}(\mathfrak{gl}_{n_i})$, then by \eqref{Comm} the order does not matter. While \eqref{Defn} does not define a state on the entire algebra, it does define a state on $Z(\mathcal{U}_{q_1}(\mathfrak{gl}_{n_1})) \otimes \cdots \otimes Z(\mathcal{U}_{q_r}(\mathfrak{gl}_{n_r}))$ -- to see this, note that 
\begin{align*}
\state{ (u^{(1)} \ro u^{(2)} \ro \cdots \ro u^{(r)})(u^{(1)} \ro u^{(2)} \ro \cdots \ro u^{(r)})^*}_{t}  &\stackrel{\eqref{Stars}}{=} \state{\pi^{(q_1)}(u^{(1)})\pi^{(q_1)}(u^{(1)})^*  \circ \cdots \circ \pi^{(q_r)}({u^{(r)}}) \pi^{(q_r)}({u^{(r)}}) ^*}_{t} \\
& \stackrel{\eqref{Comm}}{=}  \state{ ( \pi^{(q_1)}(u^{(1)})  \cdots  \pi^{(q_r)}({u^{(r)}}))( \pi^{(q_1)}(u^{(1)})  \cdots  \pi^{(q_r)}({u^{(r)}}))^* }_t .
\end{align*}

Similarly, let $j_k^{( r)}$ be the map from $\mathcal{U}_{q_1}(\mathfrak{gl}_{n_1}) \otimes \cdots \otimes \mathcal{U}_{q_r}(\mathfrak{gl}_{n_r})$ to $\mathcal{W}$ defined by
\begin{multline*}
v^{(1)} \otimes \cdots \otimes v^{(r)} \mapsto \sum_{v^{(1)}} \cdots \sum_{v^{(r)}}   \left[ \pi^{(q_1)} (v_1^{(1)}) \circ \pi^{(q_2)}( v_1^{(2)} )\circ \cdots \circ \pi^{(q_r)}(v_1^{(r)} ) \right]\\
 \otimes \left[ \pi^{(q_1)} ( v_2^{(1)}) \circ \pi^{(q_2)}( v_2^{(2)} ) \circ \cdots \circ \pi^{(q_r)}( v_2^{(r)} )\right]\\
 \otimes \cdots \otimes \left[  \pi^{(q_1)} ( v_k^{(1)}) \circ \pi^{(q_2)}( v_k^{(2)}) \circ \cdots \circ \pi^{(q_r)}(v_k^{(r)} ) \right]
\end{multline*}
Let $\mathcal{W}^{(r)}_k$ be the subalgebra generated by the images of $j^{(r)}_{1},\ldots,j^{(r)}_{k}$. Let $\omega^{(r)}$ be the state $\state{\cdot}_{t_1} \otimes \state{\cdot}_{t_2-t_1} \otimes \cdots $. Let $P_t^{( r)}$ be the operator on $\mathcal{U}_{q_1}(\mathfrak{gl}_{n_1}) \otimes \cdots \otimes \mathcal{U}_{q_r}(\mathfrak{gl}_{n_r})$ defined by 
$$
v^{(1)} \otimes \cdots \otimes v^{(r)} \mapsto \sum_{v^{(1)}} \cdots \sum_{v^{(r)}} \state{v^{(1)}_2 \otimes \cdots \otimes v^{(r)}_2}_t v^{(1)}_1 \otimes \cdots \otimes v^{(r)}_1
$$

%We summarize these maps with the following diagram:}

%\begin{diagram}
%\Uq^{\otimes r} & \rTo^{m_r} & \Uq & \rTo^{j_k} & \mathcal{W} \\
%\dTo^{P_t^{\otimes r}} && \dTo^{P_t} & \rdTo^{\state{\cdot}_t} & \dTo^{\omega} \\
%\Uq^{\otimes r} & \rTo^{m_r} & \Uq & & \mathbb{R}
%\end{diagram}
%{\color{black} The next theorem will establish that these diagrams actually commute. In particular, part (0) shows that the square on the left commutes (on a subalgebra); part (2) shows that $(P_t)$ satisfies the semigroup property; part (3) shows that the triangle on the right commutes. From the diagram, there is no apparent relationship between $P_t$ and the states $\omega$ and $\state{\cdot}$; parts (1),(4) and (5) will establish the connection. }

\begin{theorem}\label{QRW}
Assume that {\color{black}  $q_1,\ldots,q_r$ are real and nonzero. Then

(0)As maps from $Z(\mathcal{U}_{q}(\mathfrak{gl}_{N_1})) \otimes \cdots \otimes Z(\mathcal{U}_{q}(\mathfrak{gl}_{N_r})) $ to $\mathcal{U}_q(\mathfrak{gl}_N)$, where $N=\mathrm{max}(N_1,\ldots,N_r)$, there is the relation  $P_t \circ m_r = m_r \circ P_t^{( r)}.$}

(1a) The maps $(j_n)$ are related to $P_t$ by 
$$
\omega\left( w j_n(X)  \right) = \omega\left(w j_{n-1}(P_{t_n-t_{n-1}}X) \right), \quad X \in \mathcal{U}_q(\mathfrak{gl}_N), \quad w \in \mathcal{W}_{n-1}.
$$

{\color{black}
(1b) The maps $(j_n^{(r)})$ are related to $P_t^{(r)}$ by

$$
\omega^{(r)}\left(w  j_n^{(r)}(X)  \right) = \omega\left(w j_{n-1}^{(r)}(P_{t_n-t_{n-1}}^{(r)}X) \right), \quad X \in \mathcal{U}_{q_1}(\mathfrak{gl}_{N_1}) \otimes \cdots \otimes \mathcal{U}_{q_r}(\mathfrak{gl}_{N_r}), \quad w \in \mathcal{W}^{(r)}_{n-1}.
$$
}

(2a) The non--commutative Markov operators preserve the states in the sense that 
$$
\langle P_t u\rangle_{\chi_{\lambda}} = \langle u_{1} \rangle_{\chi_{\lambda}} \langle u_{2}\rangle_t = \langle u\rangle_{\chi_{\lambda}\chi^t}
$$
 and satisfy the semi--group property $P_{t+s} = P_t \circ P_s$. 
 
(2b) Similarly,
$$
\langle P_t^{(r)} (u^{(1)} \otimes \cdots \otimes u^{(r)} )\rangle_{\chi_{\lambda}} = \langle u^{(1)}_{1} \otimes \cdots \otimes u^{(r)}_1 \rangle_{\chi_{\lambda}} \langle u^{(1)}_{2} \otimes \cdots \otimes u^{(r)}_2 \rangle_t = \langle u^{(1)} \otimes \cdots \otimes u^{(r)} \rangle_{\chi_{\lambda}\chi^t}
$$
and there is the semigroup property $P_t^{(r)} \circ P_s^{(r)}= P_{t+s}^{(r)}$.

(3a) The pullback of $\omega$ under $j_n$ is the state $\langle \cdot \rangle$ on $ \mathcal{U}_q(\mathfrak{gl}_N)$, i.e. $\langle X\rangle_{t_n} = \omega(j_n(X))$.

{\color{black} (3b)  Similarly, for $X \in \mathcal{U}_{q_1}(\mathfrak{gl}_{N_1}) \otimes \cdots \otimes \mathcal{U}_{q_r}(\mathfrak{gl}_{N_r})$ we have 
$
\langle X\rangle_{t_n} = \omega^{(r)}(j_n^{(r)}(X)).
$
}

(4a) If $X,Y\in \mathcal{U}_q(\mathfrak{gl}_N)$, then for $n \leq m$ we have
$$
\omega\left(j_n(X)j_m(Y)\right) = \langle X \cdot P_{t_m-t_n}Y\rangle_{t_n}  
$$

{\color{black}
(4b) If $X^{(1)},\ldots,X^{(j)} \in \mathcal{U}_{q_1}(\mathfrak{gl}_{N_1})$ and $Y^{(1)},\ldots,Y^{(r)} \in \mathcal{U}_{q_r}(\mathfrak{gl}_{N_r})$, then for $n\leq m$ 
\begin{multline*}
\omega^{(r)}\left(j_n^{(r)}(X^{(1)} \otimes \cdots \otimes X^{(r)} )j_m^{(r)}(Y^{(1)} \otimes \cdots \otimes Y^{(r)})\right) \\
= \langle (X^{(1)} \otimes \cdots \otimes X^{(r)} ) \cdot P^{(r)}_{t_m-t_n}(Y^{(1)} \otimes \cdots \otimes Y^{(r)})\rangle_{t_n}  
\end{multline*}

}

(5a) Suppose $n_1 \leq \cdots \leq n_{k-1} \leq n_k$. Let $X_i \in \mathcal{U}_q(\mathfrak{gl}_{n_i})$  for $1 \leq i \leq k$. Then
$$
\omega\left(j_{n_1}(X_1)\cdots j_{{n_{k-1}}}(X_{k-1}) j_{n_{k}}(X_k)\right) =  \left\langle X_1 {\color{black} \cdot P^{}_{t_{n_{2}}-t_{n_{1}}} }\left( X_2 \cdot P^{ }_{t_{n_3}-t_{n_2}} \left( X_3 \cdot \ldots \cdot P^{ }_{t_{n_k}-t_{n_{k-1}}} X_r \right)\right)\right\rangle_{t_{n_1}} 
$$

(5b) Suppose $n_1 \leq \cdots \leq n_{r-1} \leq n_r$. Let $X_i \in U_{q_1}(\mathfrak{gl}_{n_1}) \otimes \cdots \otimes U_{q_r}(\mathfrak{gl}_{n_r})$ for $1 \leq i \leq k$. Then
$$
\omega^{(r)}\left(j_{n_1}^{(r)}(X_1)\cdots j_{{n_{k-1}}}^{(r)}(X_{k-1}) j_{n_{k}}^{(r)}(X_k)\right) =  \left\langle X_1 {\color{black} \cdot P^{(r)}_{t_{n_{2}}-t_{n_{1}}} }\left( X_2 \cdot P^{(r) }_{t_{n_3}-t_{n_2}} \left( X_3 \cdot \ldots \cdot P^{ (r) }_{t_{n_k}-t_{n_{k-1}}} X_r \right)\right)\right\rangle_{t_{n_1}} 
$$
\end{theorem}
\begin{proof}
{\color{black} (0) Because of the embedding $\mathcal{U}_q(\mathfrak{gl}_{N_j}) \subseteq \mathcal{U}_q(\mathfrak{gl}_N)$, we can take $v^{(1)} \otimes \cdots \otimes v^{(r)} \in Z(\mathcal{U}_q(\mathfrak{gl}_N)^{\otimes r} )$.  Then $P_t \circ m_r$ applied to this element is
\begin{align*}
(\mathrm{id} \otimes \state{\cdot}_t ) ( \Delta( v^{(1)} \cdots v^{(r)} ) )&= (\mathrm{id} \otimes \state{\cdot}_t )\sum_{(v^{(1)})} \cdots  \sum_{(v^{(r)})} (v^{(1)}_1 \cdots v^{(r)}_1 \otimes v^{(1)}_2 \cdots v^{(r)}_2) \\
&= \state{v^{(1)}_2 \cdots v^{(r)}_2}_t \sum_{(v^{(1)})} \cdots  \sum_{(v^{(r)})} v^{(1)}_1 \cdots v^{(r)}_1,
\end{align*}
which equals
$$
\state{v^{(1)}_2 \otimes \cdots \otimes  v^{(r)}_2}_t \sum_{(v^{(1)})} \cdots  \sum_{(v^{(r)})} m_r( v^{(1)}_1 \otimes \cdots \otimes v^{(r)}_1 )
$$
By equation \eqref{Defn}, this equals $m_r \circ P_t^{(r)}$ applied to the same element.

}

(1a) The proof is similar to Theorem 4.1(1) from \cite{K}, which was itself based off of Proposition 3.1 from \cite{CD}. The left--hand--side is
\begin{align*}
\omega\left( w (\pi^{\otimes n-1}\otimes \pi)\Delta X \right) &= \sum_{(X)} \omega\left( w \pi^{\otimes n-1}(X_1) \otimes \pi(X_2) \right) \\
&= \sum_{(X)} \omega\left(w \pi^{\otimes n-1}(X_1)\right) \state{X_2}_{t_n-t_{n-1}}
\end{align*}
The right--hand--side is
$$
\sum_{(X)} \omega\left( w j_{n-1}\left(\state{X_2}_{t_n-t_{n-1}} X_1\right) \right) = \sum_{(X)} \omega\left(w j_{n-1}(X_1) \right) \state{X_2}_{t_n-t_{n-1}}.
$$
So the two sides are equal. Note that this argument did not assume that $j_n$ is a homomorphism. 

{\color{black}
(1b) We argue similarly as in (1a). The right--hand--side is 
\begin{multline*}
\sum_{(X^{(1)}),\cdots, (X^{(r)})} \omega\left(w j_{n-1}^{(r)}\left(\state{X^{(1)}_2 \otimes \cdots \otimes X^{(r)}_2}_{t_n-t_{n-1}} X^{(1)}_1 \otimes \cdots \otimes X^{(r)}_1\right)\right)\\
=\sum_{(X^{(1)}),\cdots, (X^{(r)})} \omega\left( w j_{n-1}^{(r)}\left( X^{(1)}_1 \otimes \cdots \otimes X^{(r)}_1\right)w\right) \state{X^{(1)}_2 \otimes \cdots \otimes X^{(r)}_2}_{t_n-t_{n-1}}
\end{multline*}
By the definition of $\omega^{(r)}$ and $j_{n}^{(r)}$, this equals the left--hand--side.
}

(2a)
First we show
$$
\sum_{(u)} \langle u_1 \rangle_{\chi_{\lambda}} \langle u_2\rangle_t = \langle u\rangle_{\chi_{\lambda}\chi^t}
$$
Because $\{\chi_{\lambda}:\lambda \in \mathbb{GT}_n\}$ is a linear basis for the space of symmetric functions, it suffices to show that
$$
\sum_{(u)}  \mathrm{Tr}|_{V_{\lambda}} \left( u_1 \right) \mathrm{Tr}|_{V_{\mu}} \left(u_2 \right) =\mathrm{Tr}|_{V_{\lambda}\otimes V_{\mu}} \left(u \right) .
$$
But this is true because the co--product is what defines the action on tensor powers of representations, as in \eqref{Tense}.

Now, by the co--associativity property,
$$
(\mathrm{id} \otimes \Delta) \circ \Delta(u) = (\Delta \otimes \mathrm{id}) \circ \Delta(u) \Longrightarrow \sum_{(u)} \sum_{(u_2)} u_{1} \otimes u_{21}\otimes u_{22} = \sum_{(u)} \sum_{(u_1)} u_{11} \otimes u_{12} \otimes u_{2}
$$
Therefore
\begin{align*}
P_t \circ P_s (u) &= \sum_{(u)} P_t \left( u_1 \state{u_2}_s\right) = \sum_{(u), (u_1)} u_{11} \state{ u_{12}}_t \state{u_2}_s \\
&=  \sum_{(u)} \sum_{(u_1)} u_{11} \left(\mathrm{id} \otimes \state{\cdot}_t \otimes \state{\cdot}_s \right)\left( u_{11} \otimes u_{12} \otimes u_2 \right) \\
&= \sum_{(u)} \sum_{(u_2)}  u_1 \state{ u_{21}}_t \state{u_{22}}_s = \sum_{(u)} u_1 \state{ u_2}_{t+s} = P_{t+s}(u)
\end{align*}

(2b) We argue similarly as in (2a). The first part of the statement has an identical proof. For the second part:
\begin{align*}
&  (P^{(r)}_t \circ P^{(r)}_s )(v^{(1)} \otimes \cdots \otimes v^{(r)}) \\
\quad &=  \sum_{(v^{(1)}),\ldots,(v^{(r)})} \state{v^{(1)}_2 \otimes \cdots \otimes v^{(r)}_2}_s P_t^{(r)}\left(v^{(1)}_1 \otimes \cdots \otimes v^{(r)}_1\right) \\
\quad &=  \sum_{ { (v^{(1)}),\ldots,(v^{(r)}) }} \sum_{(v^{(1)}_1),\ldots,(v^{(r)}_1)}   \state{v^{(1)}_2 \otimes \cdots \otimes v^{(r)}_2}_s  \state{ v^{(1)}_{12} \otimes \cdots \otimes v^{(r)}_{12} }_t  v^{(1)}_{11} \otimes \cdots \otimes v^{(r)}_{11}
\end{align*}
Again by co--associativity, this equals
$$
\sum_{ { (v^{(1)}),\ldots,(v^{(r)}) }} \sum_{(v^{(1)}_2),\ldots,(v^{(r)}_2)}   \state{v^{(1)}_{22} \otimes \cdots \otimes v^{(r)}_{22} }_s  \state{ v^{(1)}_{21} \otimes \cdots \otimes v^{(r)}_{21} }_t  v^{(1)}_{1} \otimes \cdots \otimes v^{(r)}_{1}.
$$
Thus, as in the proof of (2a), it suffices to show that 
$$
 \sum_{(v^{(1)}_2),\ldots,(v^{(r)}_2)}  \state{v^{(1)}_{22} \otimes \cdots \otimes v^{(r)}_{22} }_s  \state{ v^{(1)}_{21} \otimes \cdots \otimes v^{(r)}_{21} }_t   =  \state{v^{(1)}_2 \otimes \cdots \otimes v^{(r)}_2}_{s+t} 
$$
By equation \eqref{Defn}, it is equivalent to show that  
\begin{multline}
 \sum_{(v^{(1)}_2),\ldots,(v^{(r)}_2)}  \state{ \pi^{(q_1)}({v^{(1)}_{22}}) \circ \cdots \circ \pi^{(q_r)}({v^{(r)}_{22}})  }_s  \state{ \pi^{(q_1)} ( {v^{(1)}_{21} )} \circ \cdots \circ \pi^{(q_r)}( {v^{(r)}_{21}}) }_t  \\
 = \state{ \pi^{(q_1)}  ( v_2^{(1)}  ) \circ \cdots \pi^{(q_r)}(v_r^{(1)} )  }_{s+t}.
\end{multline}
Because $\{\chi_{\lambda}:\lambda \in \mathbb{GT}_n\}$ is a linear basis for the space of symmetric functions,  it suffices to show that
\begin{multline*}
 \sum_{(v^{(1)}_2),\ldots,(v^{(r)}_2)}  \mathrm{Tr}\Big|_{V_{\lambda}}\left(  \pi^{(q_1)}_{\lambda}({v_{22}^{(1)}})   \circ \cdots \circ  \pi^{(q_r)}_{\lambda}({v_{22}^{(r)}})   \right) \mathrm{Tr}\Big|_{V_{\mu}}\left(  \pi^{(q_1)}_{\mu}({v_{21}^{(1)}})   \circ \cdots \circ  \pi^{(q_r)}_{\mu}({v_{21}^{(r)}})   \right)\\
= \mathrm{Tr}\Big|_{V_{\lambda}\otimes V_{\mu}}\left(  \pi^{(q_1)}_{\lambda \otimes \mu }(v_{2}^{(1)})   \circ \cdots \circ  \pi^{(q_r)}_{\lambda \otimes \mu}(v_{2}^{(r)})   \right)
\end{multline*}
Letting $\{l_i: 1 \leq i \leq \dim \lambda\}$ and $\{m_j: 1 \leq j \leq \dim \mu\}$, it is equivalent to show that
\begin{multline*}
 \sum_{(v^{(1)}_2),\ldots,(v^{(r)}_2)} \sum_{i,j}\langle l_i |  \pi^{(q_1)}_{\lambda}({v_{22}^{(1)}})   \circ \cdots \circ  \pi^{(q_r)}_{\lambda}({v_{22}^{(r)}})  | l_i\rangle \langle m_j | \pi^{(q_1)}_{\mu}({v_{21}^{(1)}})   \circ \cdots \circ  \pi^{(q_r)}_{\mu}({v_{21}^{(r)}})  | m_j \rangle \\
= \sum_{i ,j} \langle l_i \otimes m_j \vert  \pi^{(q_1)}_{\lambda \otimes \mu }(v_{2}^{(1)})   \circ \cdots \circ  \pi^{(q_r)}_{\lambda \otimes \mu}(v_{2}^{(r)})   \vert l_i \otimes m_j \rangle.
\end{multline*}
Therefore, it is sufficient to show that
\begin{multline*}
  \sum_{(v^{(1)}_2),\ldots,(v^{(r)}_2)}  \pi^{(q_1)}_{\lambda}({v_{22}^{(1)}})    \cdots   \pi^{(q_r)}_{\lambda}({v_{22}^{(r)}})  | l_i\rangle \otimes \pi^{(q_1)}_{\mu}({v_{21}^{(1)}})    \cdots   \pi^{(q_r)}_{\mu}({v_{21}^{(r)}})  | m_j \rangle \\
= \pi^{(q_1)}_{\lambda \otimes \mu }(v_{2}^{(1)})    \cdots   \pi^{(q_r)}_{\lambda \otimes \mu}(v_{2}^{(r)})   \vert l_i \otimes m_j \rangle.
\end{multline*}
But this is true by \eqref{Tense}, so we can move on to (3).

(3a) By repeatedly applying (1a) with $w=1$, we have
$$
\omega\left( j_n(X)\right) = \omega\left( j_{n-1}(P_{t_n-t_{n-1}}X)\right) = \ldots = \omega\left( j_1(P_{t_2-t_1}\circ \cdots \circ P_{t_n-t_{n-1}}(X))\right)
$$
By the definition of $\omega$ and $j_1$, and by applying (2a) this equals 
$$
\state {P_{t_2-t_1}\circ \cdots \circ P_{t_n-t_{n-1}}(X)) }_{t_1} = \langle P_{t_n-t_{n-1}}(X)\rangle_{t_1} = \langle X\rangle_{t_n}
$$

{\color{black} (3b) The proof is identical to (3a), but uses (1b) and (2b) in place of (1a) and (2a).
}

(4a) By repeated applications of (1a), 
$$
\omega(j_n(X)j_m(Y)) = \omega\left(j_n(X)j_n(P_{t_{n+1}-t_n} \circ \cdots \circ P_{t_m-t_{m-1}}(Y)) \right) 
$$ 
And then by (2a), 
$$
\omega(j_n(X)j_m(Y))  = \omega\left(j_n(X)j_n(P_{t_m-t_n}(Y)) \right).
$$
Because $j_n$ is a morphism, using (3a) finishes the proof. 

{\color{black}
(4b) The proof is identical to the proof of (4a): one uses (2b) and (1b) instead of (2a) and (1a).
}

(5a) By repeated applications of (4a),
\begin{align*}
&\omega\left(j_{n_1}(X_1)\cdots j_{{n_{r-1}}}(X_{r-1}) j_{n_{r}}(Y_r)\right) \\
&= \omega\left(j_{n_1}(X_1)\cdots j_{{n_{r-1}}}(X_{r-1} \cdot P_{t_{n_r}-t_{n_{r-1}}}Y_r)  \right) \\
&= \omega\left(j_{n_1}(X_1)\cdots j_{{n_{r-2}}}(X_{r-2} \cdot ( P_{t_{n_{r-1}}-t_{n_{r-2}}}(X_{r-1} \cdot P_{t_{n_r}-t_{n_{r-1}}}Y_r)) )\right)\\
&= \cdots \\
&= \omega \left( j_{n_1}\left(X_1 P_{t_{n_{2}}-t_{n_{1}}}\left( X_2 P_{t_{n_3}-t_{n_2}} \left( X_3 \cdots P_{t_{n_r}-t_{n_{r-1}}} Y_r\right) \right)\right)\right)
\end{align*}
By (3a), this equals
$$
\left\langle X_1 P_{t_{n_{2}}-t_{n_{1}}}\left( X_2 P_{t_{n_3}-t_{n_2}} \left( X_3 \cdots P_{t_{n_r}-t_{n_{r-1}}} Y_r \right)\right)\right\rangle_{t_{n_1}} ,
$$
as needed.

(5b) The proof is identical to the proof of (5a): one uses (4b) and (3b) instead of (4a) and (3a).
\end{proof}

\begin{remark}\label{QT}
Note that in the $q=1$ case, the non--commutative Markov operator $P_t$ preserves the center (see Theorem 4.1(5) of \cite{K} or Proposition 4.3 of \cite{CD}). We will see below that this is not true for general $q$. However, one could use the \textit{quantum trace}
$$
\state{ u}_{\chi_V}^{(q)} = \mathrm{Tr}\vert_V \left( u q^{-2\rho}\right)
$$
where
$$
2\rho = (n-1)E_{11} + (n-3)E_{22} + \cdots + (1-n)E_{nn}.
$$
Then
$$
\state{ uv}^{(q)} = \mathrm{Tr}\left( uvq^{-2\rho}\right) = \mathrm{Tr}\left( v \cdot q^{-2\rho} u \right) = \state{ v\cdot q^{-2\rho}u q^{2\rho} }^{(q)}.
$$
By 4.9(1) of \cite{J}, $S^2(u)=q^{-2\rho}u q^{2\rho} $. Thus, if $P_t^{(q)} = \left(\mathrm{id} \otimes \state{\cdot}_t^{(q)}\right)\circ \Delta$, then Proposition 1.2(1) of \cite{D} implies that $P_t^{(q)}(u)$ is central if $u$ is central. Note that when $q=1$, then the quantum trace reduces to the usual trace.
\end{remark}

\section{Connections to random surface growth}\label{Ctcp}
In this section, we will show the relationship between the non--commutative random walk and a (2+1)--dimensional random surface growth model. First, here is a description of the model, which was introduced in \cite{BF}.
\subsection{Random surface growth}
Consider the two--dimensional lattice $\Z\times\Z_+$. On each horizontal level $\Z\times\{n\}$ there are
exactly $n$ particles, with at most one particle at each lattice site. Let $\tilde{X}^{(n)}_1>\ldots>\tilde{X}^{(n)}_n$ 
denote the $x$--coordinates of the locations of the $n$ particles. Additionally, the particles need to 
satisfy the \textit{interlacing property} $\tilde{X}^{(n+1)}_{i+1} < \tilde{X}^{(n)}_i \leq \tilde{X}^{(n+1)}_i.$  
The particles can be viewed as a random stepped surface, see Figure \ref{Figure}. This can be made rigorous by defining the height function at $(x,n)$ to be the number of particles to the right of $(x,n)$.

\begin{figure}
\captionsetup{width=0.8\textwidth}
\centering

\caption{ The particles as a stepped surface.  The lattice is shifted to make the visualization easier.}

\includegraphics[height=5cm]{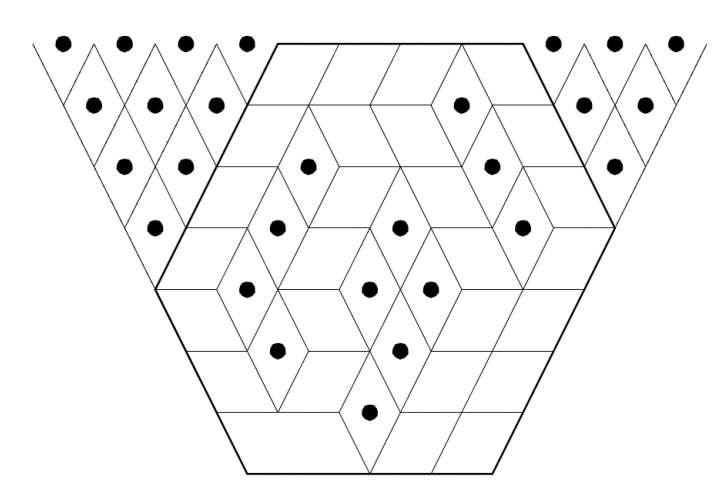}

\label{Figure}
\end{figure}

The dynamics on the particles are as follows. The initial condition is the \textit{densely packed} initial
condition, $\tilde{X}_i^{(n)}=-i+1,1\leq i\leq n$. Each particle has a clock with exponential waiting time of 
rate $1$, with all clocks independent of each other. When the clock rings, the particle attempts to jump
one step to the right. However, it must maintain the interlacing property. This is done by having  
particles push particles above it, and jumps are blocked by particles below it. One can think of lower
particles as being more massive. See Figure \ref{Jumps} for an example.

It turns out to be more convenient to use the co--ordinates $X_i^{(n)}=\tilde{X}_i^{(n)}+i-1$. Then on each level, $X_1^{(n)} \geq \ldots \geq X_n^{(n)}$ and the interlacing property becomes ${X}^{(n+1)}_{i+1} \leq {X}^{(n)}_i \leq {X}^{(n+1)}_i.$ The initial condition is $X_i^{(n)}(0)=0$.

Review some information about these probability measures and dynamics. By a result from \cite{BF,BK},
$$
e^{t\mathrm{Tr} (U-\mathrm{Id})} = \sum_{\mu} \mathrm{Prob}\left(X^{(N)}(t) = \mu \right)\frac{\chi_{\mu}(U)}{\dim\mu}
$$
where $\chi_{\mu}$ and $\dim\mu$ are the character and dimension of the highest weight representation $\mu$. By Theorem 3.1 of \cite{K1} (with $\boldsymbol{\theta}=(1,\ldots,1)$ in the statement of that theorem), for $t\geq s\geq 0$,
$$
\mathbb{P}\left(X^{(N)}(t) = \tau \vert X^{(N)}(s) = \lambda\right) = \sum_{\mu} \mathbb{P}\left( X^{(N)}(t-s)=\mu\right) c_{\lambda\mu}^{\tau} \frac{\dim\tau}{\dim\lambda \dim\mu}.
$$
where $c_{\lambda\mu}^{\tau}$ are the Littlewood--Richardson coefficients defined by
$$
\chi_{\lambda} \cdot \chi_{\mu} = \sum_{\tau} c_{\lambda\mu}^{\tau} \chi_{\tau}
$$
And therefore
\begin{equation}\label{LRC}
\begin{aligned}
e^{t\mathrm{Tr} (U-\mathrm{Id})}  \frac{\chi_{\lambda}}{\dim\lambda} &= \sum_{\mu} \mathrm{Prob}\left(X^{(N)}(t) = \mu\right)\frac{\chi_{\mu}\cdot \chi_{\lambda}}{\dim\mu \dim\lambda}\\
&=\sum_{\mu,\tau} \mathrm{Prob}\left(X^{(N)}(t) = \mu\right)  \frac{\chi_{\tau}}{\dim\tau} c_{\lambda\mu}^{\tau} \frac{\dim\tau }{\dim\mu \dim\lambda}\\
&=\sum_{\tau} \frac{\chi_{\tau}}{\dim\tau} \mathbb{P}\left(X^{(N)}(s+t) = \tau \vert X^{(N)}(s) = \lambda\right) 
\end{aligned}
\end{equation}
Furthermore, for all $t\geq 0$,
\begin{equation}\label{Gibbs2}
\mathbb{P}\left(X^{(M)}(t) = \lambda^{(M)} \vert X^{(N)}(t) = \lambda^{(N)}\right) = \Lambda\left( \lambda^{(N)},\lambda^{(M)} \right) , \quad \forall M \leq N
\end{equation}
where recall that $\Lambda$ was defined in \eqref{Gibbs}.

\begin{figure}
\captionsetup{width=0.8\textwidth}
\caption{The red particle makes a jump. If any of the black particles attempt to jump, their jump is 
blocked by the particle below and to the right, and nothing happens. White particles are not blocked.}
\centering
\includegraphics[height=6cm]{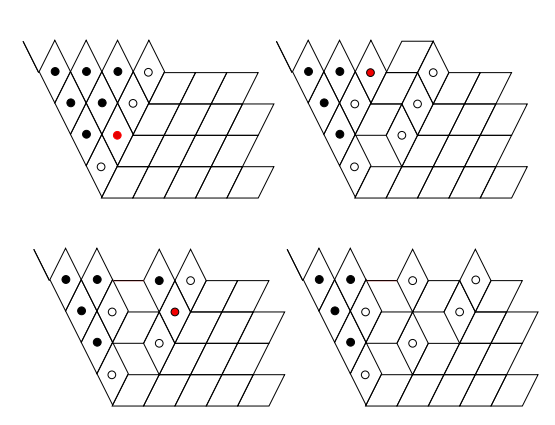}
\label{Jumps}
\end{figure}

\subsection{Restriction to center}

Let $Q_t$ be the Markov operator of the particle system on the $n$th level, which 
defines an operator $Q_t$ on $\mathrm{Fun}(\mathbb{GT}_n)$ by

$$
Q_t f(\lambda) := \sum_{\mu \in \mathbb{GT}_n} Q_t(\lambda \rightarrow \mu)f(\mu).
$$

Given $u\in \Uq$, there is a corresponding observable $O_u$ on $\mathbb{GT}_n$ given by 
\begin{equation}\label{DefinitionOfO}
O_u(\lambda) := \frac{1}{\dim\lambda} \mathrm{Tr}|_{V_{\lambda}}(u)  ,
\end{equation}
where $\chi_{\lambda}$ was defined in \eqref{Chi}. Observe that the map $O:\Uq \rightarrow \mathrm{Fun}(\mathbb{GT}_n)$ is a linear map. Let $\mathcal{F}\subset \mathrm{Fun}(\mathbb{GT}_n)$ be the image of $O$. 

The definition of $O_u$ can be extended further. If $m\leq n$ and $u\in \mathcal{U}\left(\mathfrak{gl}_m \right),$ then define $O_u(\lambda)$ on $\lambda \in \mathbb{GT}_n$ by
\begin{equation}\label{DefinitionofOO}
O_u(\lambda) = \sum_{\mu \in \mathbb{GT}_m} \Lambda(\lambda,\mu) O_u(\mu).
\end{equation}
By the definition of $\Lambda$, \eqref{DefinitionOfO} still holds.

\begin{proposition}\label{OneLevel} (1) For all $u\in \Uq$ and $t\geq 0$,
$$
\langle u\rangle_t = \mathbb{E}[O_u(\lambda(t))]
$$

(2) If $f=O_u\in \mathcal{F}$, then $Q_t f = O_{P_t u}$. In particular, $Q_t$ preserves the image of $O$. 
\end{proposition}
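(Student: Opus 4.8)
The plan is to reduce both parts to the structural identities already recorded in the excerpt, principally the key relation $\langle P_t u\rangle_{\chi_\lambda} = \langle u\rangle_{\chi_\lambda \chi_t}$ from Theorem \ref{QRW}(2), the formula \eqref{LRC} describing the surface growth dynamics in the character basis, and the defining formula \eqref{DefinitionOfO} for $O_u$. First I would observe that $\chi_t(x_1,\dots,x_n) = e^{-tn}e^{t(x_1+\cdots+x_n)}$, so that $\chi_\lambda \chi_t$, when expanded in the $\chi_\mu$ basis, reproduces exactly the coefficients $\mathrm{Prob}(X^{(N)}(t)=\mu)$ appearing in the generating identity $e^{t\mathrm{Tr}(U-\mathrm{Id})} = \sum_\mu \mathrm{Prob}(X^{(N)}(t)=\mu)\chi_\mu(U)/\dim\mu$ from \cite{BF,BK}. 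This is the bridge between the analytic object $\chi_t$ and the probabilistic law of the particle system at time $t$.

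For part (1), I would compute $\langle u\rangle_t = \langle u\rangle_{\chi_t}$ directly. Write $u$ so that $D_u$ acts on symmetric functions; since $\{\chi_\lambda\}$ is a basis, it suffices to understand $(D_u \chi_t)(1,\dots,1)$. Using property \eqref{EKProp}, $D_u$ turns $\mathrm{Tr}|_{V_\mu}(x^E)$ into $\mathrm{Tr}|_{V_\mu}(u x^E)$, so expanding $\chi_t = \sum_\mu \mathrm{Prob}(X^{(N)}(t)=\mu)\chi_\mu/\dim\mu$ and evaluating at $(1,\dots,1)$ yields $\sum_\mu \mathrm{Prob}(X^{(N)}(t)=\mu) \frac{1}{\dim\mu}\mathrm{Tr}|_{V_\mu}(u) = \sum_\mu \mathrm{Prob}(X^{(N)}(t)=\mu) O_u(\mu) = \mathbb{E}[O_u(\lambda(t))]$, where the initial condition $X^{(N)}(0)=0$ (i.e. $\lambda(0)$ trivial, with $\chi_0 \equiv 1$) makes the starting state a Dirac mass. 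This is essentially bookkeeping once the generating identity is in hand.

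For part (2), the statement $Q_t O_u = O_{P_t u}$ should follow by the same mechanism applied "at a shifted time": using \eqref{DefinitionOfO} to write $O_{P_t u}(\lambda) = \langle P_t u\rangle_{\chi_\lambda/\dim\lambda}$, then invoking Theorem \ref{QRW}(2) to rewrite this as $\langle u\rangle_{(\chi_\lambda/\dim\lambda)\chi_t}$, and finally expanding $(\chi_\lambda/\dim\lambda)\chi_t$ in the $\chi_\tau$ basis. By \eqref{LRC} the coefficient of $\chi_\tau/\dim\tau$ in that expansion is precisely $\mathbb{P}(X^{(N)}(s+t)=\tau \mid X^{(N)}(s)=\lambda) = Q_t(\lambda\to\tau)$, so applying \eqref{EKProp} and evaluating at $(1,\dots,1)$ gives $\sum_\tau Q_t(\lambda\to\tau) O_u(\tau) = Q_t O_u(\lambda)$. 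The extension \eqref{DefinitionofOO} to $u\in\mathcal{U}(\mathfrak{gl}_m)$ with $m<n$ is handled by the consistency remark after \eqref{DefinitionofOO} together with the Gibbs compatibility \eqref{Gibbs2}, which says the projection $\Lambda$ intertwines $Q_t$ on level $n$ with $Q_t$ on level $m$.

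I expect the main obstacle to be a careful verification that the expansion coefficients of $\chi_\lambda \chi_t$ genuinely match the transition probabilities in \eqref{LRC} rather than something off by a dimension normalization or a factor of $e^{-tn}$ — i.e. chasing the $\dim$ factors through \eqref{LRC} and reconciling the $\mathrm{Tr}(U-\mathrm{Id})$ normalization with $\chi_t$'s $e^{-tn}$ prefactor. A secondary subtlety is that $D_u$ for $u\in\Uq$ does not in general come from a central element, so property \eqref{EKProp} (valid for all $u$) must be used rather than the multiplicativity $D_{uv}=D_u\circ D_v$; but since we only ever apply a single $D_u$ to a character, this causes no difficulty. Once the dimension bookkeeping is pinned down, both parts are short.
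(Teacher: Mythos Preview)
Your proposal is correct and follows essentially the same route as the paper: for (1) you expand $\chi_t$ in the character basis via the generating identity and apply \eqref{EKProp}, and for (2) you combine \eqref{DefinitionOfO}, Theorem \ref{QRW}(2), and the expansion \eqref{LRC} of $(\chi_\lambda/\dim\lambda)\chi_t$ --- exactly as the paper does. The normalization worries you flag are real but resolve cleanly, and the paper does not separately treat the $m<n$ extension in this proposition.
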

\begin{proof}

(1) By definition
\begin{align*}
\langle u \rangle_t &=\sum_{\mu \in \mathbb{GT}_n} \mathbb{P}(\lambda(t) =\mu) \frac{ \mathrm{Tr}|_{V_{\mu}} u}{\dim\mu}\\
&= \sum_{\mu\in\mathbb{GT}_n} \mathbb{P}(\lambda(t) =\mu) O_u(\mu)\\
&= \mathbb{E}[O_u(\lambda(t))]
\end{align*}

(2) By linearity and \eqref{LRC}
$$
\state{u}_{\chi^t  \tfrac{ \chi_{\lambda}}{\dim\lambda}} =\sum_{\tau} \mathbb{P}\left(X^{(N)}(t) = \tau \vert X^{(N)}(s) = \lambda\right) \state{u}_{\tfrac{\chi_{\tau}}{\dim\tau} }
$$
or equivalently (by Theorem \ref{QRW}(2))
$$
O_{P_t u}(\lambda) = \sum_{\tau} \mathbb{P}\left(X^{(N)}(t) = \tau \vert X^{(N)}(s) = \lambda\right)  O_u(\tau) = \left(Q_tO_u\right)(\lambda).
$$

\end{proof}

The next theorem shows the multi--level relationship between the QRWQG and the random surface growth. This is similar to Theorem 4.5 of \cite{K}. However, the proof there is no longer valid because the center is not preserved unless $q=1$. The extra ingredient here is \eqref{LRC}, which had not been used previously.

\begin{theorem}
Assume that $q_j$ are nonzero real numbers. Suppose that $N_1 \geq \cdots \geq N_r, t_1 \leq \cdots \leq t_r$. Let  $Z_j \in Z(\mathcal{U}_{q_j}(gl_{N_j}))$ for $1\leq j \leq r-1$ and $Y_r \in \mathcal{U}_{q}(gl_{N_r})$. Set
$$
X^{(j)} = \underbrace{1 \otimes \cdots \otimes 1}_{j-1} \otimes  Z_j  \otimes \underbrace{1 \otimes \cdots \otimes 1}_{r-j}.
$$
for $1\leq j \leq r-1$ and $X^{(r)} = 1^{\otimes r-1} \otimes Y_r$.
Then  
\begin{multline*}
\omega\left(j_{1}^{(r)}(X_1^{(r)})\cdots j_{{r-1}}^{(r)}(X_{r-1}^{(r)}) j_{r}^{(r)}(X^{(r)})\right)\\
= \mathbb{E}\left[ O_{X_1}\left(\lambda^{(N_1)}(t_1)\right) \cdots O_{X_{r-1}}\left(\lambda^{(N_{r-1})}(t_{r-1}) \right)  O_{Y_r}\left(\lambda^{(N_r)}(t_r) \right) \right]
\end{multline*}
\end{theorem}
\begin{proof}
First, note that by Theorem \ref{QRW}(5b),
$$
\omega\left(j_{1}^{(r)}(X_1)\cdots j^{(r)}_{{r-1}}(X_{r-1}) j^{(r)}_{r}(Y_r)\right)
=\langle X_1 P^{(r)}_{t_2-t_1}\left( X_2 P^{(r)}_{t_3-t_2} \left( X_3 \cdots P^{(r)}_{t_r-t_{r-1}} Y_r \right)\right)\rangle_{t_1} 
$$
For the remainder of the proof, proceed by induction on $r$. When $r=1$ the result is Proposition \ref{OneLevel}(2).

Assume the statement for some $r$. Then setting $Y_r = X_r \cdot P^{(r)}_{t_{r+1}-t_r}Y_{r+1} $, the induction hypothesis implies 
\begin{multline*}
\langle X_1 P^{(r)}_{t_2-t_1}\left( X_2 P^{(r)}_{t_3-t_2} \left( X_3 \cdots P^{(r)}_{t_r-t_{r-1}} \left(X_r P^{(r)}_{t_{r+1}-t_r}Y_{r+1} \right)\right)\right)\rangle_{t_1} \\
= \mathbb{E}\left[ O_{X_1}\left(\lambda^{(N_1)}(t_1)\right) \cdots O_{X_{r-1}}\left(\lambda^{(N_{r-1})}(t_{r-1}) \right)  O_{Y_r}\left(\lambda^{(N_r)}(t_r) \right) \right]
\end{multline*}
By the definition of an expectation, this equals (where the summation over each $\mu^{(m)}$ is over $\mathbb{GT}_m$)
\begin{equation}\label{Step}
\sum_{\mu^{(N_1)},\ldots,\mu^{(N_r)}} \mathbb{P}\left( \lambda^{(N_1)}(t_1) = \mu^{(N_1) } , \ldots ,  \lambda^{(N_r)}(t_r)= \mu^{(N_r) } \right) O_{X_1}\left(\mu^{(N_1)}\right) \cdots O_{X_{r-1}}\left(\mu^{(N_{r-1})}\right)  O_{Y_r}\left(\mu^{(N_r)} \right)
\end{equation}
By the definition of $O$ in \eqref{DefinitionOfO}, and the assumption that $X_r$ is central,
$$
O_{Y_r}(\mu^{(N_r)}) = \frac{ \mathrm{Tr}\vert_{V_{\mu^{(N_r)}}} \left( X_r \cdot P_{t_{r+1}-t_r}Y_{r+1} \right) }{\dim \mu^{(N_r)}}= O_{X_r}(\mu^{(N_r)}) \frac{\mathrm{Tr}\vert_{V_{\mu^{(N_r)}}} \left(P_{t_{r+1}-t_r}Y_{r+1} \right) }{\dim \mu^{(N_r)}}.
$$
The reduction from $P_t^{(r)}$ to $P_t$ follows from 
$$
P_t^{(r)}(X^{(r)}) = (1\otimes^{j-1} \otimes P_t \otimes 1^{\otimes r-j})(X^{(r)})
$$
Furthermore, setting $\tilde{\chi}_{\mu} = \chi_{\mu}/\dim\mu$,
\begin{align*}
& \left(\dim \mu^{(N_r)}\right)^{-1}\mathrm{Tr}\vert_{V_{\mu^{(N_r)}}} \left(P_{t_{r+1}-t_r}Y_{r+1} \right) = \langle P_{t_{r+1}-t_r}Y_{r+1}\rangle_{  \tilde{\chi}_{\mu^{(N_r)}}   } = \state{    Y_{r+1} }_{\tilde{\chi}_{\mu^{(N_r)}}  \chi_{t_{r+1}-t_r}}\\
 & \stackrel{\eqref{LRC}}{=} \sum_{\nu^{(N_r)}} \mathbb{P}\left( \lambda^{(N_r)}(t_{r+1}) = \nu^{(N_r)} \vert \lambda^{(N_{r})}(t_r) = \mu^{(N_r)} \right)O_{Y_{r+1}}\left(\nu^{(N_r)}\right)\\
 & \stackrel{\eqref{DefinitionofOO}}{=} \sum_{\nu^{(N_r)}, \mu^{(N_{r+1})}} \mathbb{P}\left( \lambda^{(N_r)}(t_{r+1}) = \nu^{(N_r)} \vert \lambda^{(N_{r})}(t_r) = \mu^{(N_r)} \right)O_{Y_{r+1}}\left(\mu^{(N_{r+1})}\right) \Lambda\left(\nu^{(N_r)},\mu^{(N_{r+1})}\right)
\end{align*}

Therefore, by \eqref{Gibbs2} the expression \eqref{Step} equals
\begin{multline*}
\sum_{\substack{\mu^{(N_1)},\ldots,\mu^{(N_r)} \\ \nu^{(N_r)},\mu^{(N_{r+1})}}} \mathbb{P}\left( \lambda^{(N_1)}(t_1) = \mu^{(N_1) } , \ldots ,  \lambda^{(N_r)}(t_r)= \mu^{(N_r) } , \lambda^{(N_r)}(t_{r+1}) = \nu^{(N_r)}, \lambda^{(N_{r+1})}(t_{r+1}) = \mu^{(N_{r+1})}\right) \\
\times O_{X_1}\left(\mu^{(N_1)}\right) \cdots O_{X_{r-1}}\left(\mu^{(N_{r-1})}\right)  O_{X_r}\left(\mu^{(N_r)} \right)O_{Y_{r+1}}\left(\mu^{(N_{r+1})} \right)
\end{multline*}
Because there is no observable in $\nu^{(N_r)}$, the sum over $\nu^{(N_r)}$ can be eliminated, completing the proof.
\end{proof}

Although there is not a rigorous way to multiply elements of $\Uq$ and $\mathcal{U}_{q_1}(\mathfrak{gl}_n)$, it is not unreasonable to conjecture that the results in this section should still be true if the multiplication is interpreted formally. Here is a (numeric) example of how to do this.

\begin{example}
Consider an irreducible representation of $\mathcal{U}_q(\mathfrak{gl}_2)$ with highest weight $(\lambda_1,\lambda_2)$. The weights can be written as $(\lambda_1-j,\lambda_2+j)$ for $0\leq j \leq \lambda_1-\lambda_2$. One can check that $E_{21}E_{12} \in \mathcal{U}_{q_2}(\mathfrak{gl_n})$ acting on $(\lambda_1-j,\lambda_2+j)$ multiplies by the constant
$$
[\lambda_1-\lambda_2]_{q_2} + [\lambda_1-\lambda_2-2]_{q_2} + \ldots + [\lambda_1-\lambda_2-2j+2]_{q_2}.
$$
So that $q_1^{E_{11}+E_{22}} \otimes E_{21}E_{12}$ with the quantum trace $\mathrm{Tr}_s$ acts as the observable
\begin{multline*}
\mathcal{O}(\lambda_1,\lambda_2) \\= \frac{1}{\lambda_1-\lambda_2+1}\sum_{j=1}^{\lambda_1-\lambda_2} s^{\lambda_1-\lambda_2-2j}q_1^{\lambda_1+\lambda_2} \left([\lambda_1-\lambda_2]_q + [\lambda_1-\lambda_2-2]_q + \ldots + [\lambda_1-\lambda_2-2j+2]_q\right)
\end{multline*}
Now for $q_1=-0.27+{\color{black}0.3}i,q_2=0.8,s=0.6,t=0.31,(a_1,a_2)=(3,1)$, the determinantal formula  from section 2.3 of \cite{BF} predicts {\color{black} (where $f_t(k)$ is the $t^k$ coefficient of $e^{t}$) that $[P_t(\mathcal{O})](a_1,a_2)$ equals}
\begin{multline*}
e^{-2t} \sum_{\lambda_1=a_1}^{\infty} \sum_{\lambda_2=a_2}^{\lambda_1} \mathcal{O} \cdot \frac{ \lambda_1-(\lambda_2-1)}{{\color{black} a_1-(a_2-1)}} \det
\left(
\begin{array}{cc}
f_t(\lambda_1-a_1) &f_t(\lambda_1-(a_2-1)) \\
f_t((\lambda_2-1)-a_1) & f_t((\lambda_2-1)-(a_2-1)) \\
\end{array}
\right) \\
\approx -0.02788676811357415 - 0.002852163596477639i 
\end{multline*}
for the summation over $\lambda_1$ up to $50\approx\infty$. 

Formally, the co--product $\Delta^{\otimes 2}$ applied to $q_1^{E_{11}+E_{22}} \otimes E_{21}E_{12}$ yields 
\begin{multline}\label{CUP}
\Delta^{\otimes 2} \left(q_1^{E_{11}+E_{22}} \otimes E_{21}E_{12}\right) \\
= q_1^{E_{11}+E_{22}} \otimes q_1^{E_{11}+E_{22}} \otimes E_{21}E_{12} \otimes q_2^{E_{22}-E_{11}} + q_1^{E_{11}+E_{22}} \otimes q_1^{E_{11}+E_{22}}  \otimes q_2^{E_{11}-E_{22}} \otimes E_{21}E_{12}
\end{multline}
By applying $\state{\cdot}_t \otimes \state{\cdot}_{\epsilon} \otimes \state{\cdot}_t \otimes \state{\cdot}_{\epsilon}$ to both sides and taking $\epsilon\rightarrow 0$, Theorem \ref{QRW}(2) implies that $\langle q_1^{E_{11}+E_{22}} \otimes E_{21}E_{12} \rangle_t$ with the quantum trace at $s$ solves the differential equation
$$
y'(t) = (q_1q^{-1}s - 1 + q_1qs^{-1} - 1)y(t)  + q_1 s^{-1} \exp\left(t(q_1qs - 1 + q_1q^{-1}s^{-1}-1)\right), \quad y(0)=0
$$
which is solved by 
$$
y(t) = e^{t(q_1qs^{-1}+q_1q^{-1}s-2)}q(e^{q_1q^{-1}s^{-1}t(q^2-1)(s^2-1)}-1 ) (q^2-1)^{-1}(s^2-1)^{-1} .
$$
Furthermore, applying $\left( \mathrm{id} \otimes \langle \cdot \rangle_t^{(s)} \right)$ to \eqref{CUP}
\begin{align*}
P_t^{(s)}\left(q_1^{E_{11}+E_{22}} E_{21}E_{12}\right) &= \langle q_1^{E_{11}+E_{22}}q^{E_{22}-E_{11}} \rangle_t^{(s)} q_1^{E_{11}+E_{22}} E_{21}E_{12} + \langle q_1^{E_{11}+E_{22}}E_{21}E_{12} \rangle_t^{(s)}  q_1^{E_{11}+E_{22}}q^{E_{11}-E_{22}} \\
&=   e^{t(q_1q^{-1}s+q_1qs^{-1}-2)} q_1^{E_{11}+E_{22}} E_{21}E_{12} + y(t) q_1^{E_{11}+E_{22}}q^{E_{11}-E_{22}}
\end{align*}
which predicts {\color{black} that $[P_t(\mathcal{O})](a_1,a_2)]$ equals}
\begin{multline*}
e^{t(q_1q^{-1}s+q_1qs^{-1}-2)}  \cdot \mathcal{O} + \frac{{\color{black} y(t)}}{ {\color{black} a_1-(a_2-1)}} \sum_{j=0}^{a_1-a_2} s^{a_1-a_2-2j} q_1^{a_1+a_2}q^{a_1-j-(a_2+j)}\\
= -0.02788676811357414 - 0.002852163596477645 i
\end{multline*}
which matches to 17 decimal points.

\end{example}

\section{Asymptotic Gaussian Fluctuations}\label{Asympt}

By (46) in \cite{GZB}, the element 
$$
C^{(n)} = C^{(n)}_q :=  \sum_{i=1}^n q^{2i-2n}q^{2E_{ii}}+ (q-q^{-1})^2 \sum_{1 \leq i < j \leq n} q^{2j-2n-1}q^{E_{ii}+E_{jj}}E_{ij}E_{ji}
$$
is central in $\Uq$. When acting on the lowest weight vector of $V_{\lambda}$, the second term vanishes, so $C^{(n)}_q$ acts as the constant (see also (51) in \cite{GZB})
\begin{equation}\label{Series}
\sum_{i=1}^n q^{2(\lambda_i-i+n)} = \sum_{i=1}^n \sum_{k=0}^{\infty} \frac{(2h)^k(\lambda_i-i+n)^k}{k!} =: \sum_{k=0}^{\infty} \frac{(2h)^k}{k!} \Psi_k^{(n)}(\lambda)
\end{equation}
where $q=\exp h.$ By previously known results (\cite{BB,BF,K}), there are fixed--time asymptotics: if $N_j=[\eta_j L],t = [\tau L]$ then $\mathbb{E}\Psi_k^{(N)}(\lambda(t)) \sim L^{k+1}$ (where $\lambda(t)$ is distributed as $ X^{(N)}(t)$) and 
\begin{equation}\label{Previously}
\left( \frac{\Psi_{k_1}^{(N_1)}(\lambda(t)) - \mathbb{E}\Psi_{k_1}^{(N_1)}(\lambda(t)) }{L^{k_1}}  , \ldots, \frac{\Psi_{k_r}^{(N_r)}(\lambda(t)) - \mathbb{E}\Psi_{k_r}^{(N_r)}(\lambda(t)) }{L^{k_r}} \right) \rightarrow (\xi_1,\ldots,\xi_r)
\end{equation}
where $(\xi_1,\ldots,\xi_r)$ is a Gaussian vector with mean zero covariance
$$
\mathbb{E}[\xi_i\xi_j]=\mathbb{E}\left[\mathfrak{G}(k_i,\eta_i,\tau)\mathfrak{G}(k_j,\eta_j,\tau)\right].
$$
By \eqref{Series}, this suggests that $q_j$ should depend on $L$ as $q_j = \exp h_j/L$. This scaling also suggests that $\state{ C_q^{(N)}}_t$ should be of order $\sim L$ with fluctuations of constant order, which will be confirmed below.

In order to apply the fixed--time asymptotics in \eqref{Previously} to the observables in \eqref{Series}, we need to justify the interchange of the limit and the infinite summation. This is done in the proof of the next proposition. 

\begin{proposition}\label{FixedTime}
Suppose that $N_j = [\eta_j L], t = \tau L$ and $q_j = \exp(h_j/L)$ for $1\leq j \leq r$. Then as $L\rightarrow\infty$,
$$
\left( C_{q_1}^{(N_1)} - \state{C_{q_1}^{(N_1)} }_{t}  , \cdots, C_{q_r}^{(N_r)} - \state{C_{q_r}^{(N_r)} }_{t}   \right) \rightarrow (\mathcal{G}(h_1,\eta_1,\tau),\ldots, \mathcal{G}(h_r,\eta_r,\tau))
$$
with respect to the state $\omega(\cdot)$.
\end{proposition}
\begin{proof}
The proposition amounts to proving the following statement:
$$
\Big\langle \left(C_{q_1}^{(N_1)} - \state{C_{q_1}^{(N_1)} }_{t} \right)  \cdots \left(C_{q_s}^{(N_s)} - \state{C_{q_s}^{(N_s)} }_{t} \right) \Big\rangle_t \longrightarrow \mathbb{E}\left[  \mathcal{G}(h_1,\eta_1,\tau) \cdots \mathcal{G}(h_s,\eta_s,\tau)  \right]
$$
By {\color{black} Proposition \ref{OneLevel}(1)} and the series \eqref{Series}, it is equivalent to prove
\begin{multline*}
\sum_{k_1=0}^{\infty} \cdots \sum_{k_s=0}^{\infty} \frac{(2h_1)^{k_1}}{k_1!} \cdots \frac{(2h_s)^{k_s}}{k_s!} \Bigg\langle \left(  \frac{\Psi_{k_1}^{(N_1)}(\lambda(t)) - \mathbb{E}\Psi_{k_1}^{(N_1)}(\lambda(t)) }{L^{k_1}}  \right) \cdots \left(  \frac{\Psi_{k_s}^{(N_s)}(\lambda(t)) - \mathbb{E}\Psi_{k_s}^{(N_s)}(\lambda(t)) }{L^{k_s}}  \right)  \Bigg\rangle_t  \\
\rightarrow \mathbb{E}\left[  \mathcal{G}(h_1,\eta_1,\tau) \cdots \mathcal{G}(h_s,\eta_s,\tau)  \right].
\end{multline*}
If the order of the summation and the limit can be changed, then the result follows from \eqref{Previously}. In order to change the order, we will use the dominated convergence theorem and show the bound
\begin{multline*}
 \left| \Bigg\langle \left(  \frac{\Psi_{k_1}^{(N_1)}(\lambda(t)) - \mathbb{E}\Psi_{k_1}^{(N_1)}(\lambda(t)) }{L^{k_1}}  \right) \cdots \left(  \frac{\Psi_{k_s}^{(N_s)}(\lambda(t)) - \mathbb{E}\Psi_{k_s}^{(N_s)}(\lambda(t)) }{L^{k_s}}  \right)  \Bigg\rangle_t  \right| \\
 \leq \mathbb{E}\left[\mathfrak{G}(k_1,\eta_1,\tau) \cdots \mathfrak{G}(k_s,\eta_s,\tau)\right] + A(\tau, \eta_1,\ldots,\eta_s)^{k_1+\cdots k_s} B_{k_1}B_{k_2}\cdots B_{k_s} ,
\end{multline*}
where $A(\tau, \eta_1,\ldots,\eta_s)$ is a quantity depending on the values of $\tau, \eta_1,\ldots,\eta_s$, but not on the values of $k_1,\ldots,k_s$, and $B_k$ is the $k$--th Bell number. To see that this bound is sufficient, we use a probabilistic argument. Let $X$ be a Poisson random variable of rate $A$. Then its characteristic function is given by 
$$
\mathbb{E}[e^{i t X}] = e^{A(e^{it}-1)}, 
$$
so in particular, $\mathbb{E}[e^X] = e^{A(e-1)}<\infty$. Since $X$ is non--negative, this means that Fubini's theorem can be applied to conclude that
$$
\mathbb{E}[e^X] = \mathbb{E}\left[ \sum_{k=0}^{\infty} \frac{X^k}{k!} \right] = \sum_{k=0}^{\infty} \mathbb{E}\left[ \frac{X^k}{k!} \right] = \sum_{k=0}^{\infty}  \frac{T_k(A)}{k!} < \infty.
$$
where $T_k(x)$ is the Touchard polynomial.\footnote{Recall the following bounds, which are valid for all positive integers $k$:
\begin{align*}
\sqrt{2 \pi} k^{k+1/2} e^{-k} &< k! \\
B_k &< \left( \frac{0.792k}{\ln(k+1)}\right)^k.
\end{align*}
These two bounds imply the inequality
$$
\sum_{k=0}^{\infty} \frac{B_k}{k!}A^k < \sum_{k=1}^{\infty} \frac{1}{\sqrt{2\pi} k^{1/2}} \left( \frac{Ae\cdot 0.792}{\ln(k+1)} \right)^k < \infty.
$$
}

To show the bound, we will use explicit generators of the center $Z(U_q(\mathfrak{gl}_m))$.  These were first discovered in section 7 of  \cite{GKLLRT}; see also chapter 7 of \cite{kn:M} for an exposition. 

Let $\mathcal{G}_m$ denote the directed graph with vertices and edges
$$
\{1,\ldots,m\} \quad \{(i,j):1\leq i,j\leq m\}. 
$$
Let $\Pi_k^{(m)}$ denote the set of all paths in $\mathcal{G}_m$ of length $k$ which start and 
end at the vertex $m$. For $\pi\in \Pi_k^{(m)}$ let $r(\pi)$ denote the length of the first
return to $m$. Let $E(\pi)\in U(\mathfrak{gl}_m)$ denote the element with coefficient $r(\pi)$ obtained by taking
the product when labeling
the edge $(i,j)$ with $E_{ij}$ when $i\neq j$, and the edge $(i,i)$ with $E_{ii}-m+1$.
For example, the path 
$$
\pi=\{5\rightarrow 3\rightarrow 3\rightarrow 1\rightarrow 5\rightarrow 5\rightarrow 2\rightarrow 5\}
$$
is in $\Pi^{(5)}_7$ with $r(\pi)=4$ and 
$$
E(\pi)=4E_{53}(E_{33}-4)E_{31}E_{15}(E_{55}-4)E_{52}E_{25}.
$$
Define the elements
$$
\Psi^{(N)}_k := \sum_{m=1}^N \sum_{\pi\in \Pi_k^{(m)}} E(\pi) \in U(\mathfrak{gl}_N).
$$
For example,
$$
\Psi^{(N)}_1 = \sum_{m=1}^N (E_{mm}-m+1), \quad \Psi^{(N)}_2 = \sum_{m=1}^N (E_{mm}-m+1)^2 + 2\sum_{1\leq l<m\leq N} E_{ml}E_{lm}.
$$
When acting on the highest weight vector of $V_{\lambda}$, only the diagonal terms make a contribution, so it quickly follows that $\Psi_k^{(N)}$ acts as the constant $\Psi_k^{(N)}(\lambda)$.

The bound 
$$
\vert \Pi_k^{(m)} \vert < m^k
$$
implies the bound
$$
\sum_{m=1}^N \vert \Pi_k^{(m)} \vert \leq N^{k+1},
$$
and can be absorbed into the value $A(\tau,\eta_1,\ldots,\eta_s)$. The constant $r(\pi)$ can likewise be absorbed. An upper bound for $\langle E(\pi) \rangle_t$ for $E(\pi) \in \Pi_k^{(m)}$ follows from Proposition 4.1 of \cite{K}; it is given by the number of partitions of the set $\{1,\ldots,k\}$, which is the $k$th Bell number $B_k$. 

This shows that the summand is bounded by a summable function, so the limit can be changed with the summation, showing the result.
\end{proof}

For multi--time asymptotics, it is also necessary to find the states of each monomial in $C^{(n)}_q$. Below, recall that 
$$
\Hyp(-m;2;-x)= \sum_{r=1}^{m+1} \binom{m}{r-1} \frac{x^{r-1}}{r!}.
$$

\begin{proposition} \label{fij}
Assume that $q$ is not a root of unity. For $i<j$, 
$$
\langle q^{E_{ii}+E_{jj}} E_{ij}E_{ji} \rangle_{\gamma} = q\gamma e^{\gamma(q^2-1)} {}_{1}F_1(-(j-i-1);2;-(q-q^{-1})^2\gamma) =: f_{j-i}(\gamma).
$$
\end{proposition}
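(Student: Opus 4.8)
The plan is to compute the state $\langle q^{E_{ii}+E_{jj}}E_{ij}E_{ji}\rangle_\gamma$ by reducing to an explicit evaluation of the difference operator $D_{q^{E_{ii}+E_{jj}}E_{ij}E_{ji}}$ applied to $\chi_\gamma := \chi_{\chi_\gamma}$ and then evaluated at $(1,1,\ldots)$. By the property \eqref{EKProp} of the Etingof--Kirillov operators, for each irreducible $V_\lambda$ one has $D_{q^{E_{ii}+E_{jj}}E_{ij}E_{ji}}\chi_\lambda = \mathrm{Tr}|_{V_\lambda}(q^{E_{ii}+E_{jj}}E_{ij}E_{ji}\,x^E)$, so since $\{\chi_\lambda\}$ spans the symmetric functions and $\chi_\gamma = e^{-\gamma n}\sum \ldots$ expands in this basis, it suffices to evaluate $\mathrm{Tr}|_{V_\lambda}(q^{E_{ii}+E_{jj}}E_{ij}E_{ji})$ (setting $x=1$). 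First I would observe that $E_{ij}E_{ji}$ is not central, but it is built (via the inductive definition $E_{ij}=E_{ik}E_{kj}-q^{-1}E_{kj}E_{ik}$) out of the rank-one pieces; the key structural input is the element $C^{(n)}_q$ of \cite{GZB}, whose centrality and known eigenvalue $\sum_i q^{2(\lambda_i-i+n)}$ is recalled in \eqref{Series}. The strategy is to extract $\langle q^{E_{ii}+E_{jj}}E_{ij}E_{ji}\rangle_\gamma$ from the states of the central elements $C^{(m)}_q$ for the various $m$, using the already-established fact (Theorem \ref{QRW}(2)) that $\langle C^{(m)}_q\rangle_\gamma = \langle C^{(m)}_q\rangle_{\chi_\gamma}$ can be computed because $C^{(m)}_q$ is central and acts by a scalar on each $V_\lambda$.

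Concretely, the second step is to write down $\langle C^{(n)}_q\rangle_\gamma$ explicitly. Since $C^{(n)}_q$ acts on $V_\lambda$ by the constant $\sum_{i=1}^n q^{2(\lambda_i-i+n)}$, we get
\begin{equation*}
\langle C^{(n)}_q\rangle_\gamma = \sum_{\lambda\in\mathbb{GT}_n} \frac{\mathbb{P}_\gamma(\lambda)}{\dim\lambda}\Big(\sum_{i=1}^n q^{2(\lambda_i-i+n)}\Big)\dim\lambda \cdot(\text{normalization}),
\end{equation*}
but more usefully, applying $D$ directly to $\chi_\gamma=e^{\gamma(x_1+\cdots+x_n-n)}$ and using that the diagonal part $\sum_i q^{2i-2n}q^{2E_{ii}}$ of $C^{(n)}_q$ acts on $x^E$ by shifting $x_i\mapsto q^2 x_i$, one obtains a closed form for $\langle C^{(n)}_q\rangle_\gamma$ directly as a sum of exponentials $e^{\gamma(q^2-1)}$ type terms. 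On the other hand, by linearity of $u\mapsD_u$ (hence of $u\mapsto \langle u\rangle_\gamma$) applied to the defining formula
$$
C^{(n)}_q = \sum_{i=1}^n q^{2i-2n}q^{2E_{ii}} + (q-q^{-1})^2\sum_{1\le i<j\le n} q^{2j-2n-1}q^{E_{ii}+E_{jj}}E_{ij}E_{ji},
$$
and using the homogeneity/symmetry of $\chi_\gamma$ under permuting the variables (so $\langle q^{E_{ii}+E_{jj}}E_{ij}E_{ji}\rangle_\gamma$ depends only on $j-i$, justifying the notation $f_{j-i}(\gamma)$), I would set up a triangular linear system: the state of $C^{(n)}_q$ for each $n$ is a known quantity, and it equals a known diagonal contribution plus a weighted sum of the $f_{j-i}(\gamma)$ with multiplicities $\#\{(i,j): j-i=\ell\}=n-\ell$ and coefficients involving $q^{2j-2n-1}$. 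Solving this system (e.g. by taking successive differences in $n$, or by a generating-function manipulation) isolates $f_\ell(\gamma)$, and the combinatorial identity $\binom{m}{r-1}$ appearing in the hypergeometric series ${}_1F_1(-(j-i-1);2;-(q-q^{-1})^2\gamma)$ should emerge from expanding $e^{\gamma(q^2-1)}$ against the geometric-type weights $q^{2j-2n-1}$ and collecting powers of $(q-q^{-1})^2\gamma$.

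The main obstacle I anticipate is handling the non-centrality of $q^{E_{ii}+E_{jj}}E_{ij}E_{ji}$ cleanly: the operator product property $D_{uv}=D_u\circ D_v$ only holds when $u$ is central, so one cannot simply factor the difference operator, and one must work with $D_{q^{E_{ii}+E_{jj}}E_{ij}E_{ji}}$ as a single object via \eqref{EKProp}, i.e. through the trace $\mathrm{Tr}|_{V_\lambda}(q^{E_{ii}+E_{jj}}E_{ij}E_{ji})$. Evaluating this trace requires knowing how $E_{ij}E_{ji}$ acts on weight vectors of $V_\lambda$ — essentially a Gelfand--Tsetlin / branching computation, of which the $n=2$ case is carried out in the Example preceding the proposition ($E_{21}E_{12}$ acting on $(\lambda_1-j,\lambda_2+j)$ by $\sum_{\ell}[\lambda_1-\lambda_2-2\ell+2]_q$). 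The cleanest route is probably to avoid computing this trace from scratch and instead leverage the fact that $C^{(m)}_q$ is central for \emph{all} $m\le n$ simultaneously: the coefficient structure $q^{2j-2n-1}$ in $C^{(n)}_q$ versus $q^{2j-2m-1}$ in $C^{(m)}_q$ means the $f_\ell(\gamma)$ are overdetermined but consistent, and a single well-chosen difference $C^{(n)}_q$ versus (a rescaling of) $C^{(n-1)}_q$ should peel off exactly the top term $q^{E_{11}+E_{nn}}E_{1n}E_{n1}$, giving a recursion in the length $j-i$ whose solution is the claimed ${}_1F_1$. Verifying that this recursion matches ${}_1F_1(-(j-i-1);2;\cdot)$ is then the routine identity $\,{}_1F_1(-m;2;-x)=\sum_{r=1}^{m+1}\binom{m}{r-1}\tfrac{x^{r-1}}{r!}$ quoted just before the statement.
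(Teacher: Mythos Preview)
Your approach is genuinely different from the paper's, and as written it has a gap. The paper does not touch the central elements $C^{(n)}_q$ in this proof at all. Instead it computes the coproduct $\Delta(q^{E_{ii}+E_{jj}}E_{ij}E_{ji})$ directly from \eqref{CoProd}, applies the multiplicativity $\langle u\rangle_{\gamma+\epsilon}=\langle u_{(1)}\rangle_\gamma\langle u_{(2)}\rangle_\epsilon$ of Theorem \ref{QRW}(2), and lets $\epsilon\to 0$ to obtain the ODE
\[
f_m'(\gamma)=(q^2-1)f_m(\gamma)+qe^{\gamma(q^2-1)}+(q-q^{-1})^2\sum_{k=1}^{m-1}f_k(\gamma),\qquad f_m(0)=0,
\]
where $m=j-i$. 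This system is triangular in $m$ and is solved by iterated integration, producing the $\binom{m-1}{r-1}$ coefficients and hence the ${}_1F_1$. The dependence on $j-i$ alone falls out of the ODE for free, rather than from a symmetry argument.

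The gap in your scheme is circularity: to solve your triangular linear system you need $\langle C^{(n)}_q\rangle_\gamma$ in closed form \emph{independently} of the $f_\ell$, and you do not compute it. Knowing the eigenvalue $\sum_i q^{2(\lambda_i-i+n)}$ only expresses $\langle C^{(n)}_q\rangle_\gamma$ as an expectation over the Charlier--type measure on $\mathbb{GT}_n$ (via $\chi_\gamma=\sum_\mu \mathbb{P}(\lambda(\gamma)=\mu)\,\chi_\mu/\dim\mu$), which is not a priori a closed form. Realizing $D_{C^{(n)}_q}$ as an explicit Macdonald--type operator and applying it to $e^{\gamma\sum(x_i-1)}$ is possible in principle, but the evaluation at $x_1=\cdots=x_n=1$ runs into the singular factors $\prod_{j\ne i}(q^2x_i-x_j)/(x_i-x_j)$ and is itself a nontrivial computation you have not carried out. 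Indeed, the paper computes $P_t(C^{(n)})$ and hence implicitly $\langle C^{(n)}_q\rangle_\gamma$ only \emph{after} this proposition, in Proposition \ref{Coeff}, precisely by plugging in the $f_k$ just found. Without an independent route to $\langle C^{(n)}_q\rangle_\gamma$, your linear system has unknowns on both sides.
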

\begin{proof}
By \eqref{CoProd}, for $1 \leq i < j \leq n$
\begin{align*}
 \Delta (q^{E_{ii}+E_{jj}}  E_{ij} E_{ji}  ) 
& =   q^{E_{ii}+E_{jj}}E_{ij} E_{ji} \otimes q^{2E_{jj}} + q^{2E_{ii}} \otimes q^{E_{ii}+E_{jj}}E_{ij}E_{ji}  \\
& + (q-q^{-1})^2 \sum_{r=i+1}^{j-1} \sum_{k=i+1}^{j-1} q^{E_{ii}+E_{rr}}E_{ir}E_{ki} \otimes q^{E_{ii}+E_{jj}} E_{rj} q^{E_{kk}-E_{ii}} E_{jk} 
\end{align*}
Hence, by Theorem \ref{QRW}(2), 
\begin{equation} \label{semigroup1}
\begin{aligned}
 \state{ q^{E_{ii}+E_{jj}}  E_{ij}E_{ji}}_{\gamma+\eps} & = \state{ q^{E_{ii}+E_{jj}} E_{ij}E_{ji}}_{\gamma} \state{q^{2E_{ii}}}_{\eps} + \state{q^{2E_{jj}}}_{\gamma} \state{ q^{E_{ii}+E_{jj}} E_{ij}E_{ji}}_{\eps}  \\
 & + (q-q^{-1})^2q^{-1}\sum_{r=i+1}^{j-1} \state{q^{E_{rr}+E_{jj}}E_{rj}E_{jr}}_{\gamma}\state{q^{E_{ii}+E_{rr}}E_{ir}E_{ri}}_{\eps}
\end{aligned}
\end{equation} 
where we have used $q^{E_{rr}-E_{ii}} E_{rj}= E_{rj} q^{E_{rr}-E_{ii}}q^{(\epsilon_r,\epsilon_r-\epsilon_{r+1})} =E_{rj} q^{E_{rr}-E_{ii}}q $ for $r<j$. In particular,
\begin{multline*}
\state{ q^{E_{ii}+E_{jj}} E_{ij}E_{ji}}_{\gamma+\eps} = (1+\eps(q^2-1))\state{ q^{E_{ii}+E_{jj}} E_{ij}E_{ji}}_{\gamma} \\
+ q{\eps} \state{q^{2E_{jj}}}_{\gamma} + (q-q^{-1})^2q^{-1}\sum_{r=i+1}^{j-1} \state{q^{E_{rr}+E_{jj}}E_{rj}E_{jr}}_{\gamma}q{\eps} + O(\epsilon^2). 
\end{multline*}
Therefore, $f_{j-i}(\gamma) := \state{ q^{E_{ii}+E_{jj}} E_{ij}E_{ji}}_{\gamma}$ satisfies the differential equation
$$
f_{j-i}'(\gamma) = (q^2-1)f_{j-i}(\gamma)  + q e^{\gamma(q^2-1)} + (q-q^{-1})^2 \sum_{r=i+1}^{j-1}f_{j-r}(\gamma).
$$
In general, if a family of functions $\{f_m(t)\}$ satisfies the differential equation
$$
f'_m(x) = a f_m(x) +  g(x) + b\sum_{i=1}^{m-1}   f_{i}(x) 
$$
then using integrating factors shows that $f_m$ is solved by
\begin{align*}
f_m(x) &= e^{ax} \int_0^x e^{-ax_1} \left( g(x_1) + b\sum_{i_1=1}^{m-1} f_{i_1}(x_1)\right)dx_1 \\
&= e^{ax} \int_0^x e^{-ax_1} g(x_1)dx_1 + e^{ax} b \sum_{i_1=1}^{m-1} \int_0^x \int_0^{x_1} e^{-ax_2}\left( g(x_2) + \sum_{i_2=1}^{i_1-1} f_{i_2}(x_2)\right)dx_2 dx_1 \\
&= e^{ax} \int_0^x e^{-ax_1}g(x_1)dx_1 + e^{ax} b \binom{m-1}{1} \int_0^x \int_0^{x_1} e^{-ax_2}g(x_2)dx_2dx_1 + \ldots \\
&= \sum_{r=1}^{m} e^{ax} \binom{m-1}{r-1} b^{r-1} \int_0^x \cdots \int_0^{x_{r-1}} e^{-ax_r}g(x_r)dx_r \cdots dx_1
\end{align*}
Applying this with $a=(q^2-1), b=(q-q^{-1})^2$ and $g(t)=qe^{t(q^2-1)}$ yields
\begin{align*}
f_{ij}(\gamma) &= q e^{\gamma(q^2-1)} \sum_{r=1}^{j-i} \binom{j-i-1}{r-1}(q-q^{-1})^{2(r-1)}\frac{\gamma^r}{r!}\\
&=q\gamma e^{\gamma(q^2-1)} {}_1F_1(-(j-i-1);2;-(q-q^{-1})^2\gamma)
\end{align*}
\end{proof}

The following result is generalization for $n=2$ in \cite{Bi3} (see also chapter 13 in \cite{B2}).

\begin{proposition}\label{Coeff}
Assume that $q$ is not a root of unity. For any $t\geq 0$,
\begin{equation}
P_t(C^{(n)})  = e^{t(q^2-1)} \cdot C^{(n)} + \sum_{k=1}^{n-1} A_{k}(t) C^{(n-k)} 
\end{equation}
where
$$
A_{k}(t)  =  q^{-1}(q-q^{-1})^2 f_{k}(t).
$$
\end{proposition}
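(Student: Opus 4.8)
The plan is to compute $P_t(C^{(n)}) = (\mathrm{id}\otimes\state{\cdot}_t)\circ\Delta(C^{(n)})$ directly from the defining formula for $C^{(n)}$ and then reorganize the answer as a combination of the $C^{(m)}$. The coproduct of the Cartan part is immediate, $\Delta(q^{2E_{ii}}) = q^{2E_{ii}}\otimes q^{2E_{ii}}$, while for the off-diagonal part one uses the identity
$$
\Delta\!\left(q^{E_{ii}+E_{jj}}E_{ij}E_{ji}\right) = q^{E_{ii}+E_{jj}}E_{ij}E_{ji}\otimes q^{2E_{jj}} + q^{2E_{ii}}\otimes q^{E_{ii}+E_{jj}}E_{ij}E_{ji} + (q-q^{-1})^2\!\!\sum_{r=i+1}^{j-1}\sum_{k=i+1}^{j-1}\!\! q^{E_{ii}+E_{rr}}E_{ir}E_{ki}\otimes q^{E_{ii}+E_{jj}}E_{rj}q^{E_{kk}-E_{ii}}E_{jk},
$$
which was already derived in the proof of Proposition \ref{fij}. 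Applying $\mathrm{id}\otimes\state{\cdot}_t$ term by term, the Cartan part of $C^{(n)}$ contributes $e^{t(q^2-1)}\sum_i q^{2i-2n}q^{2E_{ii}}$ since $\state{q^{2E_{ii}}}_t = e^{t(q^2-1)}$ (the operator $D_{q^{2E_{ii}}}$ multiplies the $x^\mu$-coefficient of a character by $q^{2\mu_i}$, i.e.\ acts on $\chi_t$ by $x_i\mapsto q^2 x_i$, and evaluation at $(1,\dots,1)$ gives $e^{-tn}e^{t(n-1+q^2)}$).

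For the off-diagonal part, the first two tensor summands above produce $e^{t(q^2-1)}q^{E_{ii}+E_{jj}}E_{ij}E_{ji}$ and $f_{j-i}(t)\,q^{2E_{ii}}$ by Proposition \ref{fij}. In the double sum, the crucial point is that $q^{E_{ii}+E_{jj}}E_{rj}q^{E_{kk}-E_{ii}}E_{jk}$ has nonzero weight when $k\neq r$, so $\state{\cdot}_t$ annihilates it and only the terms $k=r$ remain; for those, commuting $q^{E_{rr}-E_{ii}}$ to the right past $E_{jr}$ via $q^{E_{aa}}E_{bc}=q^{\delta_{ab}-\delta_{ac}}E_{bc}q^{E_{aa}}$ (and $i<r<j$) rewrites the argument as $q^{-1}q^{E_{rr}+E_{jj}}E_{rj}E_{jr}$, whose state is $q^{-1}f_{j-r}(t)$ again by Proposition \ref{fij}. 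Collecting everything gives
$$
P_t(C^{(n)}) = e^{t(q^2-1)}\Big(\sum_{i} q^{2i-2n}q^{2E_{ii}} + (q-q^{-1})^2\!\!\sum_{i<j}\!\! q^{2j-2n-1}q^{E_{ii}+E_{jj}}E_{ij}E_{ji}\Big) + (q-q^{-1})^2\!\!\sum_{i<j}\!\! q^{2j-2n-1}\Big( f_{j-i}(t)q^{2E_{ii}} + q^{-1}(q-q^{-1})^2\!\!\sum_{r=i+1}^{j-1}\!\! f_{j-r}(t)\,q^{E_{ii}+E_{rr}}E_{ir}E_{ri}\Big).
$$

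The first bracket is $C^{(n)}$, yielding the $e^{t(q^2-1)}C^{(n)}$ term; it remains to identify the second piece with $\sum_{k=1}^{n-1}q^{-1}(q-q^{-1})^2 f_k(t)C^{(n-k)}$. Since the pure Cartan monomials $q^{2E_{ii}}$ and the monomials $q^{E_{aa}+E_{bb}}E_{ab}E_{ba}$ (with $E_{ab},E_{ba}$ the same elements whether computed in $\mathcal{U}_q(\mathfrak{gl}_{n-k})$ or in $\Uq$) are part of a PBW basis of $\Uq$ and hence linearly independent, it suffices to match coefficients. The coefficient of $q^{2E_{ii}}$ in the remainder is $(q-q^{-1})^2\sum_{j=i+1}^{n} q^{2j-2n-1}f_{j-i}(t)$, whereas its coefficient in $\sum_k q^{-1}(q-q^{-1})^2 f_k(t)C^{(n-k)}$ is $\sum_{k=1}^{n-i} q^{-1}(q-q^{-1})^2 f_k(t)q^{2i-2(n-k)}$, and the substitution $k=j-i$ shows these coincide; the substitution $k=j-r$ (with $i<r$ fixed) handles the $q^{E_{ii}+E_{rr}}E_{ir}E_{ri}$-coefficients in exactly the same way. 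The main obstacle I expect is bookkeeping rather than conceptual: carefully tracking the two reindexings and the powers of $q$, giving a clean justification that $\state{\cdot}_t$ kills the mixed $k\neq r$ monomials, and confirming that coefficient comparison against the $C^{(n-k)}$ is legitimate because the monomials involved are linearly independent in $\Uq$.
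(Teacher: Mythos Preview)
Your proposal is correct and follows essentially the same route as the paper: compute $P_t$ on the Cartan and off-diagonal monomials using the coproduct identity from Proposition~\ref{fij}, discard the $k\neq r$ cross terms, and then match coefficients of $q^{2E_{ii}}$ and $q^{E_{ii}+E_{rr}}E_{ir}E_{ri}$ against $\sum_k A_k(t)C^{(n-k)}$ via the substitutions $k=j-i$ and $k=j-r$. One small slip: to turn $q^{E_{ii}+E_{jj}}E_{rj}\,q^{E_{rr}-E_{ii}}E_{jr}$ into $q^{-1}q^{E_{rr}+E_{jj}}E_{rj}E_{jr}$ you should commute $q^{E_{rr}-E_{ii}}$ \emph{left} past $E_{rj}$ (picking up a factor $q^{-1}$), not right past $E_{jr}$; your stated outcome is nonetheless correct.
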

\begin{proof}
Since $\Delta(q^{2E_{ii}}) = q^{2E_{ii}} \otimes q^{2E_{ii}}$, 
$$
P_t(q^{2E_{ii}}) = e^{t(q^2-1)} q^{2E_{ii}}
$$
Now using \eqref{CoProd}, we have
$$
\begin{aligned}
P_t(q^{E_{ii}+E_{jj}}  E_{ij} E_{ji}) &= e^{t(q^2-1)}q^{E_{ii}+E_{jj}}E_{ij} E_{ji} + \langle q^{E_{ii}+E_{jj}}E_{ij}E_{ji} \rangle_t q^{2E_{ii}} \\
& + q^{-1}(q-q^{-1})^2 \sum_{r=i+1}^{j-1} \langle q^{E_{rr}+E_{jj}} E_{rj} E_{jr} \rangle_{t} q^{E_{ii}+E_{rr}}E_{ir}E_{ri}  
\end{aligned}
$$
where we have used $q^{E_{rr}-E_{ii}} E_{rj}= E_{rj} q^{E_{rr}-E_{ii}}q^{(\epsilon_r,\epsilon_r-\epsilon_{r+1})} =E_{rj} q^{E_{rr}-E_{ii}}q $ for $r<j$. Because the term $e^{t(q^2-1)}$ occurs as a coefficient in both $q^{2E_{ii}}$ and $q^{E_{ii}+E_{jj}}E_{ij}E_{ji}$, we can write
\begin{align*}
P_t(C^{(n)})  & - e^{t(q^2-1)} \cdot C^{(n)} \\
&= (q-q^{-1})^2\sum_{1\leq i<j \leq n} q^{2j-2n-1} \left( f_{j-i}(t)q^{2E_{ii}} + q^{-1}(q-q^{-1})^2 \sum_{r=i+1}^{j-1} f_{j-r}(t)q^{E_{ii}+E_{rr}}E_{ir}E_{ri}   \right)\\
& = (q-q^{-1})^2 \sum_{i=1}^{n-1} \left ( \sum_{j=i+1}^{n} q^{2j-2n-1} f_{j-i}(t) \right ) q^{2E_{ii}}  \\
& \quad + (q-q^{-1})^4 \sum_{1 \leq i < r \leq n-1} \left( \sum_{j=r+1}^{n} q^{2j-2n-2} f_{j-r}(t) \right)  q^{E_{ii}+E_{rr}}E_{ir}E_{ri} .
\end{align*}

Re--arrange the summation to note that
\begin{align*}
&\sum_{k=1}^{n-1} A_k(t) C^{(n-k)} \\
&= \sum_{k=1}^{n-1}\sum_{i=1}^{n-k} A_k(t)q^{2i-2n+2k} q^{2E_{ii}} + (q-q^{-1})^2\sum_{k=1}^{n-1} \sum_{1\leq i<j\leq n-k} A_k(t) q^{2j-2n+2k-1}q^{E_{ii}+E_{jj}}E_{ij}E_{ji} \\
&=\sum_{i=1}^{n-1} \sum_{k=1}^{n-i} A_k(t) q^{2i-2n+2k}q^{2E_{ii}} + (q-q^{-1})^2 \sum_{1 \leq i< j \leq n-1} \sum_{k=1}^{n-j} A_k(t) q^{2j-2n+2k-1}q^{E_{ii}+E_{jj}}E_{ij}E_{ji} \\
&=\sum_{i=1}^{n-1} \sum_{k=1}^{n-i} A_k(t) q^{2i-2n+2k}q^{2E_{ii}} + (q-q^{-1})^2 \sum_{1 \leq i< r \leq n-1} \sum_{k=1}^{n-r} A_{k}(t) q^{2r-2n+2k-1}q^{E_{ii}+E_{rr}}E_{ir}E_{ri} 
\end{align*}
And now setting $k=j-i$ in the first sum and $k=j-r$ in the second sum shows the result. 
\end{proof}

Notice that in the scalings at the beginning of this section,  $\state{q^{E_{ii}+E_{jj}} E_{ij}E_{ji}}_{t}$ is of order $L$ and $\state{q^{2E_{ii}}}_{t}$ is of constant order. This implies that $\state{C_q^{(N)}}_t$ is of order $L$, as expected.

We can now state the convergence. Recall the definition of convergence used in section \ref{Ncp}.

\begin{theorem}\label{Asymp}
Suppose that $N_j = [\eta_j L], t_j = \tau_jL$ and $q_j = \exp(h_j/L)$ for $1\leq j \leq r$. For $1 \leq j \leq r$, let $X^{(j)}$ denote the element
$$
\underbrace{1 \otimes \cdots \otimes 1}_{j-1} \otimes \left( (C_{q_j})^{(N_j)} - \state{ (C_{q_j})^{(N_j)} }_t \right) \otimes \underbrace{1 \otimes \cdots \otimes 1}_{r-j}.
$$
Then as $L\rightarrow\infty$,
$$
\left( j_{1} \left( X^{(1)}\right) , \cdots, j_{r}\left(  X^{(r)}  \right)\right) \rightarrow (\mathcal{G}(h_1,\eta_1,\tau_1),\ldots, \mathcal{G}(h_r,\eta_r,\tau_r))
$$
with respect to the state $\omega(\cdot)$.
\end{theorem}
\begin{proof}
Note that
$$
P_t^{(r)}(X^{(j)}) = \underbrace{1 \otimes \cdots \otimes 1}_{j-1} \otimes P_t\left( (C_{q_j})^{(N_j)} - \state{ (C_{q_j})^{(N_j)} }_t \right) \otimes \underbrace{1 \otimes \cdots \otimes 1}_{r-j}.
$$
Applying Theorem \ref{QRW}(5b) and using that each $P_t C^{(n)}$ is a linear combination of $C^{(k)}$ for $1\leq k\leq n$, this shows that the multi--time fluctuations can be written as a linear combination of fixed--time fluctuations of central elements. Each central element has a series of the form \eqref{Series}, so it follows from \eqref{Previously} that the convergence will be to some Gaussian vector. It remains to show that the covariance is that of $\mathcal{G}$. 

The theorem for fixed--time follows from Proposition \ref{Same} and the discussion at the beginning of this section. By {\color{black}Theorem \ref{QRW}(4b)}, it suffices to calculate the limit of 
$$
\state{ \left(C_{q_1}^{(N_1)} - \state{C_{q_1}^{(N_1)} }_{t_1}\right) \cdot \left(P_{t_2-t_1}C_{q_2}^{(N_2)} - \state{P_{t_2-t_1}C_{q_2}^{(N_2)} }_{t_1} \right)  }_{t_1}.
$$
By Proposition \ref{Coeff}, this equals
\begin{multline*}
e^{(t_2-t_1)(q_2^2-1)}\state{ \left(C_{q_1}^{(N_1)} - \state{C_{q_1}^{(N_1)} }_{t_1}\right) \cdot \left(C_{q_2}^{(N_2)} - \state{C_{q_2}^{(N_2)} }_{t_1} \right)  }_{t_1}. \\
+ \sum_{k=1}^{N_2-1} A_k(t_2-t_1)\state{ \left(C_{q_1}^{(N_1)} - \state{C_{q_1}^{(N_1)} }_{t_1}\right) \cdot \left(C_{q_2}^{(N_2-k)} - \state{C_{q_2}^{(N_2-k)} }_{t_1} \right)  }_{t_1}.
\end{multline*}
In the formula for $A_k(t), $ given by
\begin{align*}
 A_k(t)  &= q^{-1}(q-q^{-1})^2 \cdot f_{k}(t) \\
 &= t (q-q^{-1})^2 e^{t(q^2-1)} \sum_{r=1}^k \binom{k-1}{r-1} \frac{((q-q^{-1})^2t)^{r-1}}{r!}, 
\end{align*}
let $k,t,q$ depend on $L$ as $k = [\kappa L], t= \tau L, q= e^{h/L}$. Then the constant in front of the summation satisfies the limit
$$
\lim_{L\rightarrow \infty} L \cdot t (q-q^{-1})^2 e^{t(q^2-1)}  = \tau(2h)^2 e^{2h\tau}
$$
and each term in the summation satisfies the limit
\begin{align*}
\lim_{L\rightarrow \infty}  \binom{k-1}{r-1} \frac{((q-q^{-1})^2t)^{r-1}}{r!} &= \lim_{L \rightarrow \infty} \frac{L^{-1}(k-1)L^{-1}(k-2) \cdots L^{-1}(k-r+1)}{(r-1)!}   \frac{(L(q-q^{-1})^2t)^{r-1}}{r!} \\
&=  \frac{\kappa^{r-1}}{(r-1)!} \frac{ ( (2h)^2\tau )^{r-1}  }{r!}
\end{align*}
Therefore, by the dominated convergence theorem, 
\begin{align*}
\lim_{L \rightarrow \infty} L \cdot A_k(t) & = e^{2h\tau} \sum_{r=1}^{\infty} \frac{\kappa^{r-1} }{(r-1)!} (2h)^{2r} \frac{\tau^r}{r!} \\
 & =  e^{2h\tau}  \cdot \frac{2h\sqrt{\tau}}{\sqrt{\kappa}} I_1\left( 4h  \sqrt{\kappa\tau} \right)
\end{align*}
where $I_n(x)$ is the modified Bessel function of the first kind:
$$
I_n(x) = \sum_{r=1}^{\infty} \frac{1}{(r-1)! (r-1+n)!}\left( \frac{x}{2}\right)^{2(r-1)+n} = \frac{1}{2\pi i}\oint e^{(x/2)(t+t^{-1})}t^{-n-1}dt,
$$
where the contour encloses the origin in a counterclockwise direction. The sum over $k$ becomes a Riemann sum for an integral over $\kappa$, so therefore the asymptotic limit is
\begin{multline*}
e^{2(\tau_2-\tau_1)h_2} \mathbb{E}[\mathcal{G}(h_1,\eta_1,\tau_1)\mathcal{G}(h_2,\eta_2,\tau_1)] \\
+ e^{2(\tau_2-\tau_1)h_2} \cdot 2h_2\sqrt{\tau_2-\tau_1} \int_0^{\eta_2} \kappa^{-1/2} \cdot \frac{1}{2\pi i}\oint e^{2h_2\sqrt{\kappa}\sqrt{\tau_2-\tau_1}(t+t^{-1})} t^{-2}dt.
\end{multline*}
By \eqref{SameToDifferent}, this equals $\mathbb{E}[\mathcal{G}(h_1,\eta_1,\tau_1)\mathcal{G}(h_2,\eta_2,\tau_2)]$, which completes the proof.
\end{proof}

%To check our work, we confirm the semigroup property directly. We need to show that the coefficients of
%$$
%P_{t+s}(C^{(n)})  = e^{(t+s)(q^2-1)} \cdot C^{(n)} + \sum_{k=1}^{n-1} A^{(n)}_{n-k}(t+s) C^{(n-k)}
%$$
%match those of
%\begin{equation} \label{semigroupcheck}
%\begin{aligned}
%& P_sP_t(C^{(n)}) =e^{(t+s)(q^2-1)} \cdot C^{(n)} + \sum_{k=1}^{n-1} \left (A^{(n)}_{n-k}(s)e^{t(q^2-1)} + A^{(n)}_{n-k}(t)e^{s(q^2-1)} + \sum_{r=1}^{k-1}  A^{(n-r)}_{n-k}(s) A^{(n)}_{n-r}(t) \right ) C^{(n-k)}
%\end{aligned}
%\end{equation}

\bibliographystyle{alpha}

\end{document}